\def\squarebox#1{\hbox to #1{\hfill\vbox to #1{\vfill}}}
\newtheorem{thm}{Theorem}[section]
\newtheorem{Def}{Definition}[section]
\newtheorem{lem}{Lemma}[section]
\numberwithin{equation}{section}
\newcommand{\bel}{\begin{equation} \label}
\newcommand{\ee}{\end{equation}}
\newcommand{\pd}{\partial}
\newcommand{\R}{\mathbb{R}}
\def\epsilon{\varepsilon}
\def\phi {\varphi}
\newtheorem{prop}{Proposition}[section]
\providecommand{\abs}[1]{\left\lvert#1\right\rvert}
\providecommand{\norm}[1]{\left\lVert#1\right\rVert}
\numberwithin{equation}{section}
\renewcommand{\leq}{\leqslant}
\renewcommand{\geq}{\geqslant}
\providecommand{\abs}[1]{\left\lvert#1\right\rvert}
\providecommand{\norm}[1]{\left\lVert#1\right\rVert}
\def\beq{\begin{equation}}
\def\eeq{\end{equation}}
\newcommand{\bea}{\begin{eqnarray}}
\newcommand{\eea}{\end{eqnarray}}
\newcommand{\beas}{\begin{eqnarray*}}
\newcommand{\eeas}{\end{eqnarray*}}
\begin{document}

\title[Determination of  nonlinear term for hyperbolic equations]{On the determination of  nonlinear terms appearing in semilinear hyperbolic equations}

\author{Yavar Kian}
\address{Aix Marseille Univ, Universit\'e de Toulon, CNRS, CPT, Marseille, France.}
\email{yavar.kian@univ-amu.fr}

\begin{abstract}
We consider the inverse problem of determining a general nonlinear term appearing in a semilinear hyperbolic equation on a Riemannian manifold  with boundary $(M,g)$ of dimension $n=2,3$. We prove results of unique recovery of the nonlinear term $F(t,x,u)$, appearing in the equation $\partial_t^2u-\Delta_gu+F(t,x,u)=0$ on $(0,T)\times M$ with $T>0$, from some partial knowledge of the solutions $u$ on the boundary of the time-space cylindrical manifold $(0,T)\times M$ or  on the lateral boundary $(0,T)\times\partial M$. We determine the expression $F(t,x,u)$ both on the boundary $x\in\partial M$ and inside the manifold $x\in M$.\\
{\bf Keywords:} Inverse problems, nonlinear wave equation, semilinear equation, equations on manifolds.\\

\medskip
\noindent
{\bf Mathematics subject classification 2010 :} 35R30, 	35L71, 35L20.
\end{abstract}


\maketitle

\section{Introduction}
\subsection{Statement of the problem}
Let $(M,g)$ be a smooth compact and connected Riemannian manifold with boundary of dimension $n\geq2$ and let $T>0$.  We introduce the Laplace and wave operators
\begin{align}
\label{def_Laplace}
\Delta_{g} u = |g|^{-1/2} \sum_{j,k=1}^n\pd_{x_j}
 \left( g^{jk} |g|^{1/2} \pd_{x_k} u\right),
 \quad 
 \Box_g=\partial_t^2-\Delta_g,
\end{align}
where $|g|$ and $g^{jk}$ denote the absolute of value of the determinant and the inverse of $g$
in local coordinates, and consider, for $T>0$, the semilinear wave equation 
\begin{equation}\label{wave}\Box_g u+F(t,x,u)=0,\quad (t,x)\in (0,T)\times M,\end{equation}
with a nonlinear term $F$ suitably chosen. In this paper, we consider the inverse problem of determining $F$ from observations of solutions of \eqref{wave} on the boundary of the manifold $(0,T)\times M$.
\subsection{Motivations}
Let us first observe that nonlinear wave equations of the form \eqref{wave} can be associated with different models where the transmission of waves  is perturbed by a semilinear expression. Such phenomenon  can occur in many mechanical and electromagnetic models. For instance, we can mention the study of vibrating systems where the expression $F(t,x,u)$ can be seen as a nonlinear  perturbation of the system. The semilinear term $F(t,x,u)$ can also be associated with other perturbations arising
in electronics like  in the telegraph equation or for semi-conductors (see for instance \cite{CH}).
In this context, the goal of our inverse problem is to recover the nonlinear expression $F(t,x,u)$ which describes  the underlying physical law of the perturbed system. 

Beside these physical motivations, we mention that there is a natural mathematical motivation for the study of such inverse problems which are highly nonlinear and ill-posed.

\subsection{Known results}

Let us first mention that, to the best of our knowledge, there is only a small number of papers dealing with inverse problems for nonlinear partial differential equations. Among them we can mention the work  \cite{I3,I4,I5}  of Isakov dedicated to the recovery of nonlinear terms appearing in elliptic or parabolic equations. The method developed by Isakov is based on a linearization of the inverse problem for nonlinear equations and results based on recovery of coefficients for linear equations. This  approach has been applied in different other context. For instance, we can mention the work of \cite{IN,MU,SuU}, dealing with the unique recovery of nonlinear terms appearing in  nonlinear elliptic equations and the work of \cite{CK} dealing with the stable recovery of a semilinear term appearing in a parabolic equation. For more specific nonlinear terms, we can mention the work of \cite{COY,Kl}, who have considered similar problems with single measurements.

For hyperbolic equations we refer to the work of \cite{NM,NV} dealing with the recovery of a conductivity and  quadratic coefficients appearing in a non-linear wave equation of divergence form. We mention also the recent works of \cite{HUW,KLU,KLOU}, who have considered inverse problems for semilinear hyperbolic equations on a general Lorentzian manifold. To the best of our knowledge, beside the present paper, the recovery of a general nonlinear term, appearing in  hyperbolic equations, from boundary measurements has not been addressed so far.
\subsection{Preliminary results}

Before the statement of our main result let us first state some  properties of solutions of \eqref{wave}, that will be required in our analysis. Let us first fix the class of nonlinear terms under consideration. Let $b>0$ be such that, for $n=2$, $b>1$ and, for $n=3$, $b\in\left(1,\frac{13}{3}\right]$. For $c_1>0$ a fixed constant, we consider $\mathcal A$ the set of functions $F\in \mathcal C^3(\R_+\times M\times\R)$ satisfying
\bel{non1}|\partial_t^k\partial_x^\alpha \partial_u^jF(t,x,u)|\leq c_1(1+|u|^{b-j}),\quad (t,x,u)\in\R_+\times M\times\R,\ k+|\alpha|+j\leq 3,\ee
\bel{comcom1} \partial_t^kF(0,x,u)=0,\quad x\in\partial M,\ u\in\R,\ k=0,1.\ee
We fix also the set $\mathcal A_*$ of functions $F\in \mathcal C^3(\R_+\times M\times\R)$ satisfying \eqref{non1} and
\bel{comcom2} \partial_t^kF(0,x,0)=0,\quad x\in\partial M,\ k=0,1.\ee
For any $T>0$, we fix also $\mathcal H(0,T)$ the space of elements $$G=(f,u_0,u_1)\in H^{\frac{11}{2}}((0,T)\times\partial M)\times H^{\frac{11}{2}}( M)\times H^{\frac{9}{2}}( M)$$
satisfying the compatibility conditions
\bel{comp1}f_{|t=0}={u_0}_{|\partial M},\quad\partial_tf_{|t=0}={u_1}_{|\partial M},\quad\partial_t^2f_{|t=0}={\Delta_g u_0}_{|\partial M},\quad\partial_t^3f_{|t=0}={\Delta_g u_1}_{|\partial M},\quad\partial_t^4f_{|t=0}={\Delta_g^2 u_0}_{|\partial M}.\ee
Then, for $n=2,3$, $F\in\mathcal A$,  $T'>0$, $(f,u_0,u_1)\in\mathcal H(0,T')$ and $T\leq T'$ we consider the following problem
\bel{eqn1}
\left\{ \begin{array}{ll}  \pd_t^2 u-\Delta_{g} u + F(t,x,u)  =  0, & \mbox{in}\ (0,T)\times M,\\  
u  =  f, & \mbox{on}\ (0,T)\times \partial M,\\  
 u(0,\cdot)  =u_0,\quad  \pd_tu(0,\cdot)  =u_1 & \mbox{in}\ M.
\end{array} \right.
\ee
We prove in the Appendix (see Lemma \ref{l2}), that for 
$$\norm{f}_{H^{\frac{5}{2}}((0,T')\times\partial M)}+\norm{u_0}_{H^{\frac{5}{2}}( M)}+\norm{u_1}_{H^{\frac{3}{2}}( M)}\leq L$$
and for 
\bel{pp} \left\{\begin{array}{ll} p>\max(b,3(b-1))&\ \textrm{if }n=2\\ p=5&\ \textrm{if }n=3\end{array}\right.\ee
 there exists $T_1(L)\in(0,T']$ such that, for all $0<T< T_1(L)$ and all $F\in\mathcal A$, the problem \eqref{eqn1} admits a unique solution $u\in  W^{1,\frac{p}{b-1}}(0,T;H^2(M))\cap W^{3,\frac{p}{b-1}}(0,T;L^2(M))$.

We consider  also $\mathcal H_*(0,T')$ the space of elements $f\in H^{\frac{11}{2}}((0,T')\times\partial M)$
satisfying the compatibility conditions
\bel{comp2}f_{|t=0}=\partial_tf_{|t=0}=\partial_t^2f_{|t=0}=\partial_t^3f_{|t=0}=\partial_t^4f_{|t=0}=0.\ee
 In the same way, we prove in the Appendix (see Lemma \ref{ll2}) that for
$$\norm{f}_{H^{\frac{5}{2}}((0,T')\times\partial M)}\leq L,$$
$p$ satisfying \eqref{pp}, $T_1(L)\in(0,T']$  and all $F\in\mathcal A_*$, the problem \eqref{eqn1}, with $u_0=u_1=0$, admits a unique solution $u\in  W^{1,\frac{p}{b-1}}(0,T;H^2(M))\cap W^{3,\frac{p}{b-1}}(0,T;L^2(M))$.

Then, for $G\in\mathcal H(0,T')$ we denote by $u_{F,G}\in W^{1,\frac{p}{b-1}}(0,T;H^2(M))\cap W^{3,\frac{p}{b-1}}(0,T;L^2(M))$ the solution of \eqref{eqn1}. In the same way, we denote by $u_{F,f}\in W^{1,\frac{p}{b-1}}(0,T;H^2(M))\cap W^{3,\frac{p}{b-1}}(0,T;L^2(M))$ the solution of \eqref{eqn1} with $u_0=u_1=0$. Then, for some $L>0$, $\epsilon\in(0,1)$, fixing $0<T< T_1(L+3\epsilon)$ and the set
$$\mathcal K_L:=\{G\in\mathcal H(0,T'):\ \norm{G}_{H^{\frac{5}{2}}((0,T')\times\partial M)\times H^{\frac{5}{2}}( M)\times H^{\frac{3}{2}}( M)}\leq L+3\epsilon\},$$
we define the boundary maps
$$\mathcal B_{F,\gamma_1}: \mathcal K_L\ni G\longmapsto ({\partial_\nu u_{F,G}}_{|(0,T)\times\gamma_1},u_{F,G}(T,\cdot)_{|M})\in L^2((0,T)\times\gamma_1)\times H^1(M),$$
$$\mathcal N_{F,\gamma_1}:\{h\in\mathcal H_*(0,T'):\ \norm{h}_{H^{\frac{5}{2}}((0,T')\times\partial M)}\leq L+\epsilon\} \ni f\longmapsto {\partial_\nu u_{F,f}}_{|(0,T)\times\gamma_1}\in L^2((0,T)\times\gamma_1),$$
with $\gamma_1$ an open subset of $\partial M$ and $\nu$ the outward unit normal vector to $\partial M$. We prove in Theorem \ref{t2} that the maps $\mathcal B_{F,\gamma_1}$ and $\mathcal N_{F,\gamma_1}$ admit a continuous Fr\'echet derivative denoted by $\mathcal B_{F,\gamma_1}'$ and $\mathcal N_{F,\gamma_1}'$. The observation  of our inverse problem will be given by some partial knowledge of the Fr\'echet derivative of the map $\mathcal B_{F,\gamma_1}$ and $\mathcal N_{F,\gamma_1}$.
\subsection{Main results}
In our first result we consider the recovery of the nonlinear term $F(t,x,u)$ restricted to a portion of the lateral   boundary $(0,T)\times \partial M$. More precisely, we fix $\gamma$ an arbitrary open subset of $\partial M$, $\delta>0$, $\chi\in\mathcal C^\infty_0((0,T'+1)\times \partial M)$ satisfying $\chi=1$ on $[\delta,T']\times\gamma$ and $$\mathcal H_{*,\gamma}(0,T'):=\{f\in\mathcal H_*(0,T'):\ \textrm{supp}(f)\subset [0,T']\times\gamma\}.$$ Then, we consider the recovery of $F$ restricted to $[\delta,T)\times \gamma\times I$ from the data 
$$N_{F,\gamma}'(\lambda \chi)h,\quad h\in\mathcal H_{*,\gamma}(0,T'),\quad \lambda\in I,$$
with $I$ an interval of $\R$.
This result can be stated as follows.

\begin{thm}\label{t1} Let $n=2,3$, $F_1,F_2\in \mathcal A_*$ 
and fix $0<T< T_1(L+3\epsilon)$. Consider also $\delta>0$ and  $\chi\in\mathcal C^\infty_0((0,T'+1)\times \partial M)$ satisfying $\chi=1$ on $[\delta,T']\times\gamma$ and   $L_1:=\frac{L}{\norm{\chi}_{H^{\frac{5}{2}}((0,T')\times\partial M)}}$. Then the conditions
\bel{t1a} F_1(t,x,0)=F_2(t,x,0),\quad (t,x)\in[\delta,T]\times\gamma,\ee
\bel{t1b} N_{F_1,\gamma}'(\lambda \chi)h=N_{F_2,\gamma}'(\lambda \chi)h,\quad \lambda\in[-L_1,L_1],\ h\in\mathcal H_{*,\gamma}(0,T'),\ee
imply 
\bel{t1c} F_1(t,x,\lambda)=F_2(t,x,\lambda),\quad (t,x,\lambda)\in[\delta,T]\times\gamma\times[-L_1,L_1].\ee
\end{thm}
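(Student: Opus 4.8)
The first step is to make the Fr\'echet derivative explicit. By Theorem~\ref{t2}, the linearization of the solution map $f\mapsto u_{F,f}$ at the point $\lambda\chi$ in a direction $h\in\mathcal H_{*,\gamma}(0,T')$ is the solution $v=v_{F,\lambda,h}$ of
\[ \Box_g v+q_{F,\lambda}\,v=0\ \textrm{ in }(0,T)\times M,\qquad v=h\ \textrm{ on }(0,T)\times\partial M,\qquad v(0,\cdot)=\partial_tv(0,\cdot)=0, \]
with time-dependent potential $q_{F,\lambda}(t,x):=\partial_uF(t,x,u_{F,\lambda\chi}(t,x))$, and the data in \eqref{t1b} is the associated partial Dirichlet-to-Neumann operator $N_{F,\gamma}'(\lambda\chi)h={\partial_\nu v}_{|(0,T)\times\gamma}$. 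The point is that, since $\chi=1$ on $[\delta,T']\times\gamma$, the boundary condition forces $u_{F,\lambda\chi}(t,x)=\lambda\chi(t,x)=\lambda$ for $(t,x)\in[\delta,T]\times\gamma$, so that the trace of the potential on the relevant part of the lateral boundary is
\[ q_{F,\lambda}(t,x)=\partial_uF(t,x,\lambda),\qquad (t,x,\lambda)\in[\delta,T]\times\gamma\times[-L_1,L_1]. \]
The restriction $\lambda\in[-L_1,L_1]$ ensures $\norm{\lambda\chi}_{H^{\frac{5}{2}}((0,T')\times\partial M)}\leq L$, so that $\lambda\chi$ and its perturbations by $h$ remain in the well-posedness regime of Lemma~\ref{ll2}.

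This reduces the theorem to a boundary determination result for the linear wave equation: for each fixed $\lambda$, I must recover the common boundary value on $[\delta,T]\times\gamma$ of the potentials $q_{1,\lambda}:=q_{F_1,\lambda}$ and $q_{2,\lambda}:=q_{F_2,\lambda}$ from the equality of the corresponding partial Dirichlet-to-Neumann maps. Writing $v_i$ for the solution of the linearized problem with potential $q_{i,\lambda}$ and boundary data $h$, the difference $v:=v_1-v_2$ solves $\Box_gv+q_{1,\lambda}v=(q_{2,\lambda}-q_{1,\lambda})v_2$ with $v_{|(0,T)\times\partial M}=0$ and $v(0,\cdot)=\partial_tv(0,\cdot)=0$, while \eqref{t1b} gives ${\partial_\nu v}_{|(0,T)\times\gamma}=0$. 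Pairing this equation with a solution $w$ of $\Box_gw+q_{1,\lambda}w=0$ and integrating by parts over $(0,T)\times M$, every boundary and time contribution cancels provided $w$ has vanishing trace on $(0,T)\times(\partial M\setminus\gamma)$ and vanishing Cauchy data at $t=T$; the interior terms then yield the orthogonality identity
\[ \int_0^T\!\!\int_M\bigl(q_{1,\lambda}-q_{2,\lambda}\bigr)\,v_2\,w\,dV_g\,dt=0. \]

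To turn this into pointwise information I would feed into the identity a family of geometric optics (Gaussian beam) solutions concentrating, as a large parameter $\rho\to\infty$, at an arbitrary point $(t_0,x_0)\in(\delta,T)\times\gamma$: the solution $v_2$ is generated by a boundary source $h_\rho\in\mathcal H_{*,\gamma}(0,T')$ supported in a shrinking neighborhood of $(t_0,x_0)$, and $w=w_\rho$ is a matching beam for the potential $q_{1,\lambda}$, localized at the same point with trace supported in $\gamma$ and away from $t=T$, chosen (by conjugating the phase) so that the product $v_2w_\rho$ is non-oscillatory. After the standard normalization $v_2w_\rho$ behaves like an approximate identity at $(t_0,x_0)$, and the orthogonality identity forces $(q_{1,\lambda}-q_{2,\lambda})(t_0,x_0)=0$. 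As $(t_0,x_0)$ is arbitrary and the potentials are continuous, $q_{1,\lambda}=q_{2,\lambda}$ on $[\delta,T]\times\gamma$, which by the first step reads
\[ \partial_uF_1(t,x,\lambda)=\partial_uF_2(t,x,\lambda),\qquad (t,x,\lambda)\in[\delta,T]\times\gamma\times[-L_1,L_1]. \]

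It then remains to integrate in the nonlinear variable: for $(t,x)\in[\delta,T]\times\gamma$ and $\lambda\in[-L_1,L_1]$ one has $F_i(t,x,\lambda)=F_i(t,x,0)+\int_0^\lambda\partial_uF_i(t,x,s)\,ds$, and the hypothesis \eqref{t1a} equates the two terms at $s=0$ while the preceding identity equates the integrands, giving \eqref{t1c}. The main difficulty lies in the boundary determination of the third paragraph: one must build the concentrating solutions so that their boundary data genuinely belongs to $\mathcal H_{*,\gamma}(0,T')$ --- in particular is supported in $[0,T']\times\gamma$ and satisfies the compatibility conditions \eqref{comp2}, which is automatic since the beams are localized away from $t=0$ --- control the remainders of the beam construction uniformly in $\rho$ for a merely $\mathcal C^2$ potential, and ensure that the contributions at the final time $t=T$ really vanish on the finite cylinder $(0,T)\times M$.
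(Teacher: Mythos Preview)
Your overall architecture matches the paper's exactly: linearize via Theorem~\ref{t2} to reduce~\eqref{t1b} to equality of the partial Dirichlet--to--Neumann maps $\Lambda_{q_{F_j,\lambda\chi},\gamma,*}$ for the linear equation, observe that on $[\delta,T]\times\gamma$ the trace of the potential is exactly $\partial_uF_j(t,x,\lambda)$ because the boundary condition forces $u_{F_j,\lambda\chi}=\lambda$ there, invoke a boundary determination result for the linear problem, and finally integrate in $\lambda$ using~\eqref{t1a}. Where the paper simply cites Theorem~\ref{t3} for the boundary determination step, you propose to prove it by a different mechanism, and this is where a genuine gap appears.

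Your proposed boundary determination --- the orthogonality identity from integration by parts combined with Gaussian beams ``concentrating at $(t_0,x_0)$'' --- does not work as stated. Gaussian beams for $\Box_g$ concentrate along null bicharacteristics, not at single spacetime points. With boundary source $h_\rho$ supported near $(t_0,x_0)$, the solution $v_2$ is concentrated along a null geodesic entering the interior; after the conjugate-phase trick the product $v_2 w_\rho$ is non-oscillatory but still lives along that geodesic, so the identity $\int(q_{1,\lambda}-q_{2,\lambda})\,v_2\,w_\rho\,dV_g\,dt=0$ yields at best a weighted geodesic X-ray transform of $q_{1,\lambda}-q_{2,\lambda}$, not its pointwise value at the boundary point. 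The product cannot behave ``like an approximate identity at $(t_0,x_0)$'' since $(t_0,x_0)$ lies on a set of measure zero in the domain of integration. The paper bypasses the orthogonality identity entirely: Theorem~\ref{t3} (via Proposition~\ref{p2}) constructs, in boundary normal coordinates, solutions $u_j=e^{i\rho(t-\psi)}(a_0+\rho^{-1}a_{j,1}+\rho^{-2}a_{j,2,\rho})+R_{j,\rho}$ for $j=1,2$ with \emph{identical} Dirichlet trace, and reads off $q_1(t_0,x_0)-q_2(t_0,x_0)$ directly from $\partial_\nu u_1-\partial_\nu u_2$ on $\gamma$, the transport equation forcing $\partial_\nu a_{1,1}-\partial_\nu a_{2,1}=\tfrac{i}{2}(q_1-q_2)$ at the boundary.

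You also underestimate the regularity issue. The potential $q_{F_j,\lambda\chi}=\partial_uF_j(\cdot,u_{F_j,\lambda\chi})$ is not $\mathcal C^2$: from Lemma~\ref{l2} one only has $u_{F_j,\lambda\chi}\in H^2((0,T)\times M)\cap\mathcal C([0,T]\times M)$, hence $q_{F_j,\lambda\chi}\in H^2\cap\mathcal C$. This is precisely why the paper cannot quote existing boundary-determination results (which require smooth coefficients) and instead develops Theorem~\ref{t3} with a mollified approximation $q_{j,\rho}$ of $q_j$ and quantitative remainder bounds (Lemma~\ref{appro}, estimate~\eqref{bounded1}). Your sketch gives no indication of how a two-term beam construction and its remainder control would survive at this regularity.
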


This first result corresponds to the recovery of the nonlinear term $F$ restricted to a portion $\gamma$ of the boundary of $M$. In order to recover $F$ inside $M$ we will first need additional information about $M$. Let us first recall the definition of simple manifolds.
\begin{Def}
\label{def_simple}
A compact smooth Riemannian manifold with boundary $(M,g)$ is simple if it is simply connected, the boundary $\pd M$ is strictly convex in the sense of the second fundamental form, and $M$ has no conjugate points. 
\end{Def}

With this additional assumption, we can extend Theorem \ref{t1} in the following way.
\begin{thm}\label{tt2} Let $n=2,3$, $M$ be a simple manifold, $F_1,F_2\in \mathcal A$ and fix $0<T< T_1(L+3\epsilon)$, $$L_2:=\frac{L}{\norm{1}_{H^{\frac{5}{2}}((0,T')\times\partial M)}+\norm{1}_{H^{\frac{5}{2}}( M)}},$$
with $1$ the constant function given by $[0,T]\times M\ni (t,x)\mapsto1$. Then the conditions 
\bel{tt2a} F_1(t,x,0)=F_2(t,x,0),\quad (t,x)\in(\{0\}\times M)\cup ((0,T)\times\partial M),\ee
\bel{tt2b} B_{F_1,\pd M}'(\lambda,\lambda,0)H=B_{F_2,\pd M}'(\lambda,\lambda,0)H,\quad \lambda\in\left[-L_2,L_2\right],\ H\in\mathcal H(0,T')\ee
imply
\bel{tt2c} F_1(t,x,\lambda)=F_2(t,x,\lambda),\quad (t,x,\lambda)\in[0,T]\times\partial M\times\left[-L_2,L_2\right],\ee
\bel{tt2d} F_1(0,x,\lambda)=F_2(0,x,\lambda),\quad (x,\lambda)\in  M\times\left[-L_2,L_2\right].\ee
Here $(\lambda,\lambda,0)$ denotes the element of $\mathcal H(0,T')$ corresponding to the different traces of the constant map $(t,x)\mapsto\lambda$.
\end{thm}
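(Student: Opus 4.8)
The plan is to linearise the boundary map $\mathcal B_{F_i,\partial M}$ at the constant state $\lambda$, to recast \eqref{tt2b} as an inverse problem for a linear wave equation with a potential, and to exploit that at this state the background solution is \emph{explicitly} equal to $\lambda$ on the very set where $F$ must be determined. First note that the constant map $(t,x)\mapsto\lambda$ indeed belongs to $\mathcal H(0,T')$, since $\Delta_g\lambda=0$ makes all the conditions \eqref{comp1} hold. Writing $u_i^\lambda:=u_{F_i,(\lambda,\lambda,0)}$ for the corresponding solution of \eqref{eqn1} and $q_i^\lambda(t,x):=\partial_u F_i\big(t,x,u_i^\lambda(t,x)\big)$, the differentiability statement of \thmref{t2} gives that $\mathcal B'_{F_i,\partial M}(\lambda,\lambda,0)H$, for $H=(f,u_0,u_1)\in\mathcal H(0,T')$, is the pair $\big({\partial_\nu v_i}_{|(0,T)\times\partial M},\,v_i(T,\cdot)\big)$, where $v_i$ solves the linearised problem
\[
\Box_g v_i+q_i^\lambda v_i=0 \ \textrm{ in } (0,T)\times M,\qquad v_i=f \ \textrm{ on } (0,T)\times\partial M,\qquad v_i(0,\cdot)=u_0,\quad \partial_t v_i(0,\cdot)=u_1 .
\]
The key remark is that $u_i^\lambda\equiv\lambda$ on $(\{0\}\times M)\cup\big((0,T)\times\partial M\big)$, directly from the boundary and initial data defining $u_i^\lambda$; hence $q_i^\lambda=\partial_u F_i(\cdot,\lambda)$ there, with the \emph{same} inner argument for $i=1,2$. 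This is exactly why $F$ can be recovered on this set but not in the interior, where $u_1^\lambda\neq u_2^\lambda$ in general.

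Next I would convert \eqref{tt2b} into an orthogonality identity. For fixed $H$ set $w:=v_1-v_2$; it satisfies $\Box_g w+q_1^\lambda w=(q_2^\lambda-q_1^\lambda)v_2$ together with $w(0,\cdot)=\partial_t w(0,\cdot)=0$, $w=0$ on $(0,T)\times\partial M$ (common Dirichlet data) and, from \eqref{tt2b}, $\partial_\nu w=0$ on $(0,T)\times\partial M$ and $w(T,\cdot)=0$ in $M$. Pairing the equation with any solution $z$ of $\Box_g z+q_1^\lambda z=0$ chosen so that $z(T,\cdot)=0$, and integrating by parts over $(0,T)\times M$, all boundary contributions disappear: the lateral ones because $w=\partial_\nu w=0$, and those at $t=T$ because $w(T,\cdot)=0$ and $z(T,\cdot)=0$ — this is where the final observation $u_{F,G}(T,\cdot)$ built into $\mathcal B_{F,\partial M}$ is essential. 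There remains
\[
\int_0^T\!\!\int_M\big(q_1^\lambda-q_2^\lambda\big)\,v_2\,z\,\d V_g\,\d t=0
\]
for every admissible $v_2$ and every such $z$.

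The core of the argument is to extract $q_1^\lambda=q_2^\lambda$ on $(\{0\}\times M)\cup\big((0,T)\times\partial M\big)$ from this identity. On the lateral part this is a local boundary determination: concentrating $v_2$ and $z$ near a point of $(0,T)\times\partial M$ forces $\big(q_1^\lambda-q_2^\lambda\big)$ to vanish there, exactly as in the proof of \thmref{t1}, and this needs no assumption on $M$. For the initial slice I would use geometric optics solutions whose spatial part concentrates along a geodesic of $(M,g)$ and which carry a common temporal oscillation $e^{i\rho t}$; integrating by parts in $t$ and invoking $z(T,\cdot)=0$ to discard the contribution at $t=T$, the identity localises at $t=0$ and reduces, in the high-frequency limit $\rho\to\infty$, to the vanishing of the geodesic X-ray transform of $\big(q_1^\lambda-q_2^\lambda\big)(0,\cdot)$ on $M$. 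Because $M$ is simple this transform is injective, so $\big(q_1^\lambda-q_2^\lambda\big)(0,\cdot)=0$ on $M$. Carrying out this construction on a general simple manifold — building the concentrating solutions, controlling the remainders in the regularity dictated by $\mathcal A$ and the exponents \eqref{pp}, and enforcing $z(T,\cdot)=0$ — is the main technical obstacle, and it is here that simplicity is indispensable.

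Finally I would pass from potentials to nonlinearities. The two previous steps yield $\partial_u F_1(t,x,\lambda)=\partial_u F_2(t,x,\lambda)$ on $(\{0\}\times M)\cup\big((0,T)\times\partial M\big)$ for every $\lambda\in[-L_2,L_2]$, because there $u_1^\lambda=u_2^\lambda=\lambda$. Together with \eqref{tt2a}, which matches the two nonlinearities at $\lambda=0$ on the same set, integration in the last variable gives
\[
F_1(t,x,\lambda)-F_2(t,x,\lambda)=\int_0^\lambda\big(\partial_u F_1-\partial_u F_2\big)(t,x,s)\,\d s=0 .
\]
Evaluated on $(0,T)\times\partial M$ (and extended to $t=0$ by continuity) this is \eqref{tt2c}, and on $\{0\}\times M$ it is \eqref{tt2d}, which completes the proof.
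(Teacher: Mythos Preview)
Your linearisation step, the identification $\mathcal B'_{F_i,\partial M}(\lambda,\lambda,0)=\mathcal D_{q_i^\lambda,\partial M}$ via \thmref{t2}, and the final integration in $\lambda$ using \eqref{tt2a} all coincide with the paper's proof. The divergence is in how you handle the resulting linear inverse problem. The paper does not attempt to show $q_1^\lambda=q_2^\lambda$ only on $(\{0\}\times M)\cup((0,T)\times\partial M)$: it simply invokes \thmref{tin1} (i.e.\ \cite[Theorem 1.2]{KO}), which on a simple manifold gives $q_1^\lambda=q_2^\lambda$ on \emph{all} of $(0,T)\times M$ from $\mathcal D_{q_1^\lambda,\partial M}=\mathcal D_{q_2^\lambda,\partial M}$, and then restricts this identity to the two pieces where $u_i^\lambda=\lambda$. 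This is both shorter and stronger than what you need.

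Your alternative route --- an orthogonality identity plus a bespoke construction --- is sound for the lateral part (it is exactly the content of \thmref{t3}), but the initial-slice argument as you sketch it is not quite right. Solutions of $\Box_g z+q z=0$ concentrating along spatial geodesics do not carry a pure ``temporal oscillation $e^{i\rho t}$''; the eikonal equation forces phases of the form $t\pm\psi(x)$ with $|\nabla_g\psi|_g=1$, so your product $v_2 z$ involves $e^{2i\rho t}$ only after choosing opposite spatial phases, and then the amplitudes propagate along null bicharacteristics, not along spatial geodesics at fixed $t$. More importantly, imposing $z(T,\cdot)=0$ forces the remainder in the geometric-optics ansatz for $z$ to be of size $O(1)$ at $t=T$, so the integration-by-parts-in-$t$ localisation you describe does not cleanly isolate the $t=0$ contribution at order $1/\rho$. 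The result you want at $t=0$ \emph{is} true, but the route in \cite{KO} goes through light-ray transforms over null geodesics of $(0,T)\times M$ and then exploits simplicity to invert; it is not a direct ``localisation at $t=0$''. Since that argument is precisely what \thmref{tin1} packages, the cleanest fix is to replace your third paragraph by a single citation of \thmref{tin1}, after which your proof coincides with the paper's.
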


In the specific case of a bounded domain of $\R^n$, $n=2,3$, with Euclidean metric, we can give a more precise result with restriction of the data to some portion of the boundary and some restrictions of the solutions  at $t=0$. To state this result which will be our last main result, we consider first the following tools. For any $\omega\in\mathbb S^{n-1}=\{y\in\R^n:\ \abs{y}=1\}$ we consider the $\omega$-shadowed and $\omega$-illuminated faces of $\partial\Omega$
\[\partial\Omega_{+,\omega}=\{x\in\partial\Omega:\ \nu(x)\cdot\omega\geq0\},\quad \partial\Omega_{-,\omega}=\{x\in\partial\Omega:\ \nu(x)\cdot\omega\leq0\}.\]
 Here, for all $k\in\mathbb N^*$, $\cdot$ denotes the scalar product in $\R^k$ defined by
\[ x\cdot y=x_1y_1+\ldots +x_ky_k,\quad x=(x_1,\ldots,x_k)\in \R^k,\ y=(y_1,\ldots,y_k)\in \R^k.\]
We fix $\omega_0\in \mathbb S^{n-1}$ and we consider $U=[0,T']\times U'$ (resp $V=(0,T)\times V'$) with $U'$ (resp $V'$) an open  neighborhood of $\partial\Omega_{+,\omega_0}$ (resp $\partial\Omega_{-,\omega_0}$) in $\partial\Omega$. Let us also consider the following restriction of the space   $\mathcal H(0,T')$ given by
$$\mathcal H_U(0,T'):=\{H=(h,h_0,h_1)\in\mathcal H(0,T'):\ h_0=0,\ \textrm{supp}(h)\subset U\}.$$

\begin{thm}\label{tt3} Let $n=2,3$, $M=\overline{\Omega}$ with $\Omega$ an open connected and smooth domain of $\R^n$ with the Euclidean metric, let $F_1,F_2\in \mathcal A$ and fix $0<T<T_1(L+3\epsilon)$.  Then the conditions \eqref{tt2a} and
\bel{tt3a} B_{F_1,V}'(\lambda,\lambda,0)H=B_{F_2,V}'(\lambda,\lambda,0)H,\quad \lambda\in[-L_2,L_2],\ H\in\mathcal H_{U}(0,T')\ee
imply
\bel{tt3b} F_1(t,x,\lambda)=F_2(t,x,\lambda),\quad (t,x,\lambda)\in[0,T]\times\partial M\times\left[-L_2,L_2\right],\ee
\bel{tt3c} F_1(0,x,\lambda)=F_2(0,x,\lambda),\quad (x,\lambda)\in  M\times\left[-L_2,L_2\right].\ee

\end{thm}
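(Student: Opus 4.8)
The plan is to linearize the inverse problem, reduce it to the recovery of a time--dependent potential for a linear wave equation, and then attack that linear problem by geometric optics adapted to the $\omega_0$-shadowed/illuminated splitting of $\partial\Omega$. First, using the Fr\'echet differentiability of the boundary maps (Theorem \ref{t2}), I would compute $B_{F_i,V}'(\lambda,\lambda,0)H$. Writing $u_i:=u_{F_i,(\lambda,\lambda,0)}$ for the solution of \eqref{eqn1} carrying the constant trace $\lambda$ and $v_i$ for the derivative of the solution map at $(\lambda,\lambda,0)$ in the direction $H=(h,0,h_1)\in\mathcal H_U(0,T')$, differentiation of \eqref{eqn1} shows that $v_i$ solves
\[ \Box_g v_i + q_i\, v_i = 0 \ \text{ in } (0,T)\times\Omega, \quad v_i = h \ \text{ on } (0,T)\times\partial\Omega, \quad v_i(0,\cdot)=0,\ \partial_t v_i(0,\cdot)=h_1, \]
with the potential $q_i(t,x):=\partial_u F_i(t,x,u_i(t,x))$. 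The decisive elementary remark is that $u_i\equiv\lambda$ on $(\{0\}\times\Omega)\cup((0,T)\times\partial\Omega)$, hence
\[ q_i(0,x)=\partial_u F_i(0,x,\lambda), \qquad q_i(t,x)=\partial_u F_i(t,x,\lambda) \ \text{ for } (t,x)\in(0,T)\times\partial\Omega. \]
In these terms \eqref{tt3a} asserts precisely that $\partial_\nu v_1=\partial_\nu v_2$ on $(0,T)\times V'$ and $v_1(T,\cdot)=v_2(T,\cdot)$ on $\Omega$, for every admissible $H$.

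Next I would set $v:=v_1-v_2$, which solves $\Box_g v + q_1 v = -(q_1-q_2)v_2$ with vanishing Cauchy data at $t=0$ (here the constraint $h_0=0$ enters), vanishing lateral trace, and, by \eqref{tt3a}, $\partial_\nu v=0$ on $(0,T)\times V'$ and $v(T,\cdot)=0$ on $\Omega$. Pairing with a solution $w$ of $\Box_g w + q_1 w =0$ and integrating by parts over $(0,T)\times\Omega$ produces the identity
\[ \int_{(0,T)\times\Omega}(q_1-q_2)\,v_2\,w\,\mathrm{d}x\,\mathrm{d}t=\mathcal R, \]
where $\mathcal R$ collects only the contributions carried by the unmeasured data, namely $\partial_\nu v$ on $(0,T)\times(\partial\Omega\setminus V')$ and $\partial_t v(T,\cdot)$ on $\Omega$. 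I would then choose $v_2$ and $w$ as geometric optics solutions built on the null phase $\varphi(t,x)=t-x\cdot\omega_0$ (admissible for $\Box_g$ since the metric is Euclidean), with conjugate oscillations $e^{\pm i\sigma\varphi}$ so that the phases cancel in the product, and with amplitudes transported along the spatial rays $s\mapsto x+s\omega_0$ and depending on a large parameter $\sigma$. The role of the hypotheses $\mathrm{supp}(h)\subset U$ (near the $\omega_0$-shadowed face) and of measuring on $V$ (near the $\omega_0$-illuminated face) is exactly to allow amplitudes whose energy enters and leaves $\Omega$ only through the measured regions, forcing $\mathcal R\to0$ as $\sigma\to\infty$. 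In the limit the identity reduces to the vanishing of a weighted integral of $q_1-q_2$ along the rays of direction $(1,\omega_0)$; equivalently, the light-ray transform of $q_1-q_2$ in this direction vanishes.

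Finally, since $q_i$ is evaluated at the background $u_i$ and the two backgrounds coincide, $u_1=u_2=\lambda$, only on $(\{0\}\times\Omega)\cup((0,T)\times\partial\Omega)$, the transport identity can be converted into information on $F_1-F_2$ precisely on these two sets. Using concentrating (Gaussian beam type) amplitudes focused at a boundary point or at an initial point, together with the full final-time datum $v(T,\cdot)$ on $\Omega$, I would extract the pointwise equalities $\partial_u F_1(0,x,\lambda)=\partial_u F_2(0,x,\lambda)$ on $\Omega$ and $\partial_u F_1(t,x,\lambda)=\partial_u F_2(t,x,\lambda)$ on $(0,T)\times\partial\Omega$, valid for every $\lambda\in[-L_2,L_2]$. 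Integrating in $\lambda$ from $0$ and invoking the normalization \eqref{tt2a}, which fixes $F_1$ and $F_2$ at $\lambda=0$ on the very same sets, then yields \eqref{tt3b} and \eqref{tt3c}. The main obstacle is precisely this last extraction: because $F_1\neq F_2$ a priori inside $\Omega$ the backgrounds differ there and no interior information survives, so the entire recovery must be confined to the boundary and the initial slice; making $\mathcal R$ negligible from only the partial lateral measurement on $V'$ and a single fixed direction $\omega_0$, while keeping the limiting weights nondegenerate and reaching every boundary point and every point of $\{t=0\}$, is the technical heart of the argument.
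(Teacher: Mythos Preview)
Your linearization and the final integration in $\lambda$ match the paper exactly. The divergence --- and the gap --- is in the linear step. The paper does not attempt to extract $q_{F_1,\lambda}-q_{F_2,\lambda}$ only on $(\{0\}\times\Omega)\cup((0,T)\times\partial\Omega)$: it simply invokes Theorem~\ref{tin2} (that is, \cite[Theorem 1.1]{Ki3}) as a black box to obtain $q_{F_1,\lambda}=q_{F_2,\lambda}$ on the \emph{entire} cylinder $(0,T)\times\Omega$, and only afterwards restricts to the two sets where the background $u_{F_j,(\lambda,\lambda,0)}$ equals $\lambda$. Your remark that ``no interior information survives'' conflates the potential $q$ with the nonlinearity $F$; the intermediate global identification of $q$ is available and is precisely what the paper uses.

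Your direct attack does not close as written. Real geometric optics with the single fixed phase $t-x\cdot\omega_0$ yields, once the oscillations cancel, only the light-ray transform of $q_1-q_2$ along rays of the one direction $(1,\omega_0)$; that family of line integrals does not determine $q_1-q_2$ pointwise, neither at $t=0$ nor on the lateral boundary. The argument behind Theorem~\ref{tin2} in fact uses geometric optics solutions depending on further large parameters together with a Carleman estimate to absorb the contribution of $\partial_\nu v$ on $(0,T)\times(\partial\Omega\setminus V')$ in your term $\mathcal R$; the support condition on the amplitude alone does not make $\mathcal R\to 0$. Finally, the appeal to ``concentrating (Gaussian beam type) amplitudes focused at a boundary point or at an initial point'' does not repair the gap: beams built on null phases concentrate along an entire null ray, not at a single point, so they still produce ray integrals rather than point values, and there is no mechanism here analogous to the boundary-normal construction of Section~3 that would single out an interior point of $\{t=0\}\times\Omega$. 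The clean fix is simply to quote Theorem~\ref{tin2} after the linearization, exactly as the paper does.
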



\subsection{Comments about the main results}

To the best of our knowledge Theorem \ref{t1}, \ref{tt2} and \ref{tt3} are the first results of recovery of a general semilinear term appearing in a hyperbolic nonlinear equation from boundary measurements. Indeed, to the best of our knowledge one can only find results dealing with recovery of coefficients, appearing in a nonlinear hyperbolic equation, in the mathematical literature (see e.g. \cite{NM,NV}). It seems that such results have only been considered for parabolic or elliptic equations (e.g. \cite{CK,I3,I4,I5,IN,MU,SuU}).  Note also that like \cite{CK,I3,I4}, we manage to recover the nonlinear term at the lateral boundary $(0,T)\times\partial M$, with data restricted to the lateral boundary, but also inside the domain.

The proof of Theorem \ref{t1}, \ref{tt2} and \ref{tt3}, are based on a linearization procedure inspired by \cite{CK,I3,I4,I5}. The idea consists in transforming the recovery of the nonlinear term $F(t,x,u)$ into the recovery of time-dependent coefficients $q(t,x)=\partial_uF(t,x,u(t,x))$, where $u$ solves \eqref{eqn1} with suitable choice of the data $(f,u_0,u_1)$, appearing in a linear hyperbolic equation. So far this approach has been considered only with H\"older continuous solutions of some nonlinear parabolic or elliptic equations. For hyperbolic equations, the existence of such smooth solutions  seems to require at least strong assumptions on  the semilinear term under consideration. For this reason, in this paper, we provide, for what seems to be the first time, the extension of the linearization procedure considered  by \cite{I3}, to solutions lying in Sobolev space instead of H\"older continuous space. This extension of the analysis of \cite{I3} allows us to consider the case of nonlinear hyperbolic equations.

As mentioned above, our approach consists in transforming our inverse problem into the recovery of a time-dependent potential of the form $q(t,x)=\partial_uF(t,x,u(t,x))$,  where $u$ solves \eqref{eqn1}. This means that the regularity of the coefficient $q$ will depend explicitly on the solution of the nonlinear problem \eqref{eqn1}. For this reason, we can not apply results dealing with recovery of smooth time-dependent coefficients. In Theorem \ref{tt2} and \ref{tt3}, we use the results of \cite{HK,Ki1,Ki3,KO} dealing with the global recovery of such coefficients with low regularity assumptions. For Theorem \ref{t1}, we need to use results of recovery of time-dependent coefficients on the portion $(0,T)\times\gamma$ of  the lateral boundary $(0,T)\times\partial M$ from measurements restricted also to $(0,T)\times\gamma$. Moreover, we need to consider such results on some general Riemannian manifold. To the best of our knowledge \cite{SY} is the only work dealing with results close to the one needed for Theorem \ref{t1} (see also \cite{SU} for time-independent coefficients). However, the approach of \cite{SY}, based on local properties of general geometric optics solutions,  requires strong smoothness assumptions and it can not be applied in the context of Theorem \ref{t1}. For this reason we introduce a new approach for the recovery of less-regular coefficients in the proof of Theorem \ref{t3} (see Section 3). The result of Theorem \ref{t3} is based on a global construction of particular solutions of the linear problem \eqref{eq1}, with a control on their behavior close to the boundary. In contrast to other related results (e.g. \cite{SU,SY}) we do not restrict our analysis on some local properties of general geometric optics solutions associated with \eqref{eq1}, but some global construction in boundary normal coordinates suitably designed for any point $(t,x)\in (0,T)\times\gamma$.

In contrast to other related results for parabolic or elliptic equations (e.g. \cite{CK,I3,I4,I5}), we make only small restrictions on the class of nonlinear terms under consideration. Indeed, we even consider semilinear equations  with solutions that may blow-up at finite time. For this purpose, we state our result on, what can correspond to, the infimum of the final time  of existence, denoted by $T_1$, of maximal solutions associated with all possible  semilinear terms lying in $\mathcal A$. Here $T_1$ is a function of the size of the data $(f,u_0,u_1)$. We believe that with additional assumptions on the class of admissible nonlinear terms $\mathcal A$ (see \cite{BLP,Ka,IJ})  our result would be equivalent to the one stated by \cite{CK,I3,I4,I5} for global solutions of some nonlinear parabolic equations. However, in order to preserve the generality of our results, we prefer to keep this statement.

Let us observe, that, to the best of our knowledge, contrary to all other works dealing with recovery of nonlinear terms (e.g. \cite{CK,I3,I4,I5,IN,MU,SuU}), we do not state our results with the boundary map $\mathcal B_{F,\gamma_1}$ or $\mathcal N_{F,\gamma_1}$ associated with the nonlinear problem \eqref{eqn1}, but with some partial knowledge of their Fr\'echet derivative. By taking into account the important amount of data contained into $\mathcal B_{F,\gamma_1}$ or $\mathcal N_{F,\gamma_1}$, this statement of the main results  makes an important difference in terms of restriction of the data used for solving the inverse problem.

Our analysis is restricted to dimension of space $n=2,3$, but we believe that with suitable assumptions it could be extended to higher dimension. This restriction is due to the application of the Sobolev embedding theorem in the linearization procedure.

\subsection{Outline}

This paper is organized as follows. In Section 2,  we define the maps  $\mathcal B_{F,\gamma_1}$ and $\mathcal N_{F,\gamma_1}$ and we prove that they admit a Fr\'echet derivative associated with  solutions of linear wave equations with  time-dependent coefficients. In Section 3, we establish the recovery on the portion $(0,T)\times \gamma$ of a time-dependent potential from measurements of solutions of the linear problem restricted to $(0,T)\times \gamma$. We prove this result, which is stated in Theorem \ref{t3}, for coefficients $q\in H^2((0,T)\times M)\cap \mathcal C([0,T]\times M)$. In Section 4, we recall some results about recovery of time-dependent coefficients appearing in hyperbolic equations borrowed from \cite{Ki3,KO}. In Section 5, we combine all the arguments introduced in the preceding sections of the paper in order to complete the proof of Theorem \ref{t1}, \ref{tt2} and \ref{tt3}. Finally, in the Appendix we show  local existence of sufficiently smooth solutions of \eqref{eqn1}.

\section{Linearization of the inverse problem}
In this section we will  prove that the maps $\mathcal B_{F,\gamma_1}$ and $\mathcal N_{F,\gamma_1}$ are well defined and admit a continuous Fr\'echet derivative. 
According to Lemma \ref{l2} and \ref{ll2} (see the Appendix),  for all $L>0$ there exists $T_1(L)\in(0,T']$ such that for all $F\in\mathcal A$, $p>1$ satisfying \eqref{pp}, $T<T_1(L)$ and for all $(f,u_0,u_1)\in\mathcal H(0,T')$ satisfying 
$$\norm{f}_{H^{\frac{5}{2}}((0,T')\times\partial M)}+\norm{u_0}_{H^{\frac{5}{2}}( M)}+\norm{u_1}_{H^{\frac{3}{2}}( M)}\leq L,$$
the problem \eqref{eqn1} admits a unique solution   $u\in W^{1,\frac{p}{b-1}}(0,T;H^2(M))\cap W^{2,\frac{p}{b-1}}(0,T;H^1(M))$ satisfying \eqref{l2ab}. In the same way, applying Lemma \ref{ll2} we deduce that, for all $L>0$,  $F\in \mathcal A_*$ and for all  $f\in\mathcal H_*(0,T')$ satisfying 
$$\norm{f}_{H^{\frac{5}{2}}((0,T')\times\partial M)}\leq L,$$
   problem \eqref{eqn1}, with $u_0=u_1=0$, admits a unique solution lying in  $W^{1,\frac{p}{b-1}}(0,T;H^2(M))\cap W^{2,\frac{p}{b-1}}(0,T;H^1(M))$ satisfying \eqref{l2ab}. Using these results we can define the maps $\mathcal B_{F,\gamma_1}$ and $\mathcal N_{F,\gamma_1}$. We will now show that these maps admit a continuous Fr\'echet derivative that we will use for linearizing our inverse problem. For this purpose, we consider the following linear initial boundary value problem 

\bel{eqli1}
\left\{ \begin{array}{ll}  \pd_t^2 w-\Delta_{g} w + q w  =  0, & \mbox{in}\ (0,T)\times M,\\  
w  =  h, & \mbox{on}\ (0,T)\times \partial M,\\  
 w(0,\cdot)  =h_0,\quad  \pd_tw(0,\cdot)  =h_1 & \mbox{in}\ M,
\end{array} \right.
\ee
to which we associate the linear operator
$$\mathcal D_{q,\gamma_1}:\mathcal H(0,T') \ni H=(h,h_0,h_1)\longmapsto ({\partial_\nu w}_{|(0,T)\times\gamma_1},w(T,\cdot)_{|M})\in L^2((0,T)\times\gamma_1)\times H^1(M),$$
and for $w$ the solution of \eqref{eqli1}, with $h_0=h_1=0$, the linear operator
$$\Lambda_{q,\gamma_1}:\mathcal H_*(0,T') \ni h\longmapsto {\partial_\nu w}_{|(0,T)\times\gamma_1}\in L^2((0,T)\times\gamma_1).$$
From now on, for any $H=(h,h_0,h_1)\in\mathcal H(0,T')$, we denote by $\norm{H}_\mathcal H$ the norm defined by
$$\norm{H}_\mathcal H^2:=\norm{h}_{H^{\frac{11}{2}}((0,T')\times\partial M)}^2+\norm{u_0}_{H^{\frac{11}{2}}( M)}^2+\norm{u_1}_{H^{\frac{9}{2}}( M)}^2.$$
We proceed now to the following linearization of the maps $\mathcal B_{F,\gamma_1}$ and $\mathcal N_{F,\gamma_1}$ introduced in Section 1.1.

\begin{thm}\label{t2}  Assume that $n=2$ or $n=3$ and let $F\in \mathcal A$ $($resp. $F\in\mathcal A_*$$)$. Then, the maps $\mathcal B_{F,\gamma_1}$ $($resp. $\mathcal N_{F,\gamma_1}$$)$ admits a continuous Fr\'echet derivative $\mathcal B_{F,\gamma_1}'$ $($resp. $\mathcal N_{F,\gamma_1}'$$)$ on 
$$\{G\in\mathcal H(0,T'):\ \norm{G}_{H^{\frac{5}{2}}((0,T')\times\partial M)\times H^{\frac{5}{2}}( M)\times H^{\frac{3}{2}}( M)}\leq L\},$$
$$\left(\textrm{resp. }\{h\in\mathcal H_*(0,T'):\ \norm{h}_{H^{\frac{5}{2}}((0,T')\times\partial M)}\leq L\}\right).$$
Moreover, fixing  $$G\in\{K\in\mathcal H(0,T'):\ \norm{K}_{H^{\frac{5}{2}}((0,T')\times\partial M)\times H^{\frac{5}{2}}( M)\times H^{\frac{3}{2}}( M)}\leq L\},$$ 
$$ \left(\textrm{resp. }f\in\{h\in\mathcal H_*(0,T'):\ \norm{h}_{H^{\frac{5}{2}}((0,T')\times\partial M)}\leq L\}\right),$$
$q_{F,G}(t,x):=\partial_uF(t,x,u_{F,G}(t,x))$ (resp. $q_{F,f}(t,x):=\partial_uF(t,x,u_{F,f}(t,x))$), we find
\bel{t2a} \begin{aligned}&\mathcal B_{F,\gamma_1}'(G)H=\mathcal D_{q_{F,G},\gamma_1}H,\quad H\in\mathcal H(0,T')\\
&\left(\textrm{resp. }\mathcal N_{F,\gamma_1}'(f)h=\Lambda_{q_{F,f},\gamma_1}h,\quad  h\in\mathcal H_*(0,T')\right).\end{aligned}\ee

\end{thm}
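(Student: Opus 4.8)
The plan is to prove Fréchet differentiability by a perturbation argument built on the second order Taylor expansion of $F$ in its last variable, and then to read off the derivative from the linearized equation; I treat $\mathcal B_{F,\gamma_1}$, the case of $\mathcal N_{F,\gamma_1}$ being identical after setting $u_0=u_1=0$. Fix $G$ in the indicated ball and a direction $H=(h,h_0,h_1)$ with $\norm{H}_{\mathcal H}$ small enough that $G+H$ still lies in the domain where Lemma \ref{l2} produces a unique solution. Write $u=u_{F,G}$, $v=u_{F,G+H}$, and let $w$ solve the linear problem \eqref{eqli1} with potential $q=q_{F,G}$ and data $H$. Subtracting the equations satisfied by $u$ and $v$ and using the integral form of Taylor's formula
\[
F(t,x,v)-F(t,x,u)=\partial_uF(t,x,u)\,(v-u)+r,\qquad r=(v-u)^2\int_0^1(1-\tau)\,\partial_u^2F(t,x,u+\tau(v-u))\,\d\tau,
\]
I obtain that $z:=v-u-w$ solves \eqref{eqli1} with potential $q_{F,G}$, homogeneous initial and boundary data, and source $-r$. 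Since $\mathcal D_{q_{F,G},\gamma_1}$ is precisely the map sending the data of \eqref{eqli1} to $({\partial_\nu w}_{|(0,T)\times\gamma_1},w(T,\cdot)_{|M})$, identity \eqref{t2a} will follow once I show that the observation of $z$, namely ${\partial_\nu z}_{|(0,T)\times\gamma_1}$ in $L^2$ and $z(T,\cdot)$ in $H^1(M)$, is $o(\norm{H}_{\mathcal H})$.

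Two ingredients close the estimate. On the nonlinear side, the well-posedness and stability of Lemma \ref{l2} give $\norm{v-u}\lesssim\norm{H}_{\mathcal H}$ in the solution norm $W^{1,\frac{p}{b-1}}(0,T;H^2(M))\cap W^{3,\frac{p}{b-1}}(0,T;L^2(M))$. On the linear side, the energy estimate for \eqref{eqli1} together with the hidden regularity (sharp trace) estimate for the wave equation bounds the observation of $z$ by $\norm{r}_{L^1(0,T;L^2(M))}$. The heart of the matter is that $r$ is genuinely quadratic: using the growth bound \eqref{non1} for $\partial_u^2F$, the embedding $H^2(M)\hookrightarrow\mathcal C(M)$ valid for $n=2,3$ to control $u+\tau(v-u)$ in $L^\infty$ and hence $\partial_u^2F(t,x,u+\tau(v-u))$, and $H^2(M)\hookrightarrow L^4(M)$ for the factor $(v-u)^2$, I get
\[
\norm{r}_{L^1(0,T;L^2(M))}\lesssim\int_0^T\norm{(v-u)(t)}_{H^2(M)}^2\,\d t\lesssim\norm{v-u}^2\lesssim\norm{H}_{\mathcal H}^2 .
\]
Combined with the two estimates above this yields the desired $o(\norm{H}_{\mathcal H})$ bound on the observation of $z$ and proves both the differentiability and the formula \eqref{t2a}.

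For continuity of the map $G\mapsto\mathcal B_{F,\gamma_1}'(G)=\mathcal D_{q_{F,G},\gamma_1}$, I would argue in two steps. First, the solution map $G\mapsto u_{F,G}$ is continuous by Lemma \ref{l2}, so, using the Lipschitz bound on $\partial_u^2F$ from \eqref{non1} and the same Sobolev embeddings, $G\mapsto q_{F,G}=\partial_uF(\cdot,\cdot,u_{F,G})$ is continuous into the space in which the linear theory is stable. Second, applying the linear energy and trace estimates to the difference of two solutions of \eqref{eqli1} with potentials $q_{F,G}$ and $q_{F,G'}$ but identical data shows that $q\mapsto\mathcal D_{q,\gamma_1}$ is Lipschitz in operator norm. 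Composing these two continuities gives continuity of the Fréchet derivative.

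The main obstacle I anticipate is the bookkeeping of regularity, since the potential $q_{F,G}$ carries only the regularity inherited from the nonlinear solution $u_{F,G}$. One must check that the $L^p_t$ solution norm supplied by the Appendix is at once strong enough to place $r$ in the source space $L^1(0,T;L^2(M))$ required by the linear energy estimate and to render the normal derivative trace of $z$ well defined in $L^2((0,T)\times\gamma_1)$. This is exactly where the dimension restriction $n=2,3$, together with the exponents in \eqref{non1} and \eqref{pp}, is used, through the Sobolev embeddings and the Hölder bookkeeping that make the quadratic bound on $r$ close.
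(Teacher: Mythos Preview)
Your approach is essentially the paper's: the same decomposition $z=v-u-w$, the same second–order Taylor remainder, and the same use of linear energy plus hidden–regularity trace estimates to bound the observation of $z$ by $\norm{r}_{L^1(0,T;L^2(M))}$, which is then shown to be $O(\norm{H}_{\mathcal H}^2)$ via Sobolev embeddings in dimension $n\leq 3$.

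The one place where you are looser than the paper is the Lipschitz step $\norm{v-u}\lesssim\norm{H}_{\mathcal H}$. Lemma~\ref{l2} only provides the a~priori bound \eqref{l2ab} on a single nonlinear solution, not a stability estimate for the difference; the paper establishes this bound \emph{within} the proof of Theorem~\ref{t2} by noting that $y:=v-u$ solves the \emph{linear} problem \eqref{eqli3} with potential $V(t,x)=\int_0^1\partial_uF(t,x,u+\tau(v-u))\,\d\tau$ and data $H$, then differentiating once in $t$, applying the linear energy estimate of \cite[Proposition~1]{HK}, and using elliptic regularity on each time slice to reach $\norm{y}_{\mathcal C([0,T];H^2(M))}\leq C\norm{H}_{\mathcal H}$. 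This is exactly the kind of ``bookkeeping of regularity'' you anticipated, and it is where the $H^{11/2}$ scale of the data and the $W^{1,p/(b-1)}_t$ bound on $V$ actually enter; once you insert this argument your sketch coincides with the paper's proof.
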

\begin{proof} Since the proof for $\mathcal B_{F,\gamma_1}$ and $\mathcal N_{F,\gamma_1}$ are similar, we will only prove this result for $B_{F,\gamma_1}$. Moreover, without lost of generality, we assume that $\gamma_1=\partial M$.  For this purpose, we fix $H:=(h,h_0,h_1)\in \mathcal H(0,T')$ satisfying $\norm{H}_{H^{\frac{5}{2}}((0,T')\times\partial M)\times H^{\frac{5}{2}}( M)\times H^{\frac{3}{2}}( M)}+\norm{H}_\mathcal H\leq \epsilon$ and we consider $v=u_{F,G+H}-u_{F,G}-w$, with $w$ solving \eqref{eqli1} with $q=q_{F,G}$. By Taylor expansion in $u$ of $F$, we find
$$\begin{aligned}&F(t,x,u_{F,G+H}(t,x))\\
&=F(t,x,u_{F,G}(t,x))+\partial_uF(t,x,u_{F,G}(t,x))(u_{F,G+H}(t,x)-u_{F,G}(t,x))\\
\ &\ \ \ +\left(\int_0^1(1-s)\partial_u^2F(t,x,u_{F,G}(t,x)+s(u_{F,G+H}(t,x)-u_{F,G}(t,x)))ds\right)(u_{F,G+H}(t,x)-u_{F,G}(t,x))^2.\end{aligned}$$
Then, $v$ solves the linear problem
\bel{eqli2}
\left\{ \begin{array}{ll}  \pd_t^2 v-\Delta_{g}v + q_{F,G} v  =  -R, & \mbox{in}\ (0,T)\times M,\\  
v  =  0, & \mbox{on}\ (0,T)\times \partial M,\\  
v(0,\cdot)  =0,\quad  \pd_tv(0,\cdot)  =0 & \mbox{in}\ M,
\end{array} \right.
\ee
with
$$\begin{aligned}&R(t,x)\\
&:=\left(\int_0^1(1-s)\partial_u^2F(t,x,u_{F,G}(t,x)+s(u_{F,G+H}(t,x)-u_{F,G}(t,x)))ds\right)(u_{F,G+H}(t,x)-u_{F,G}(t,x))^2.\end{aligned}$$
Since $M$ is of dimension $n\leq3$, by the Sobolev embedding theorem, the space $W^{1,\frac{p}{b-1}}(0,T;H^2(M))$ embedded continuously into $\mathcal C([0,T]\times M)$ and we deduce that 
$$\norm{R}_{L^2(0,T;L^2( M))}\leq C\norm{R}_{L^\infty((0,T)\times M)}\leq C\norm{u_{F,G+H}-u_{F,G}}_{L^\infty((0,T)\times M)}^2.$$
Combining this with \cite[Theorem A.2]{BCY}, \cite[Proposition 1]{HK}, \eqref{l1ab} and applying the Sobolev embedding theorem, we obtain
\bel{t2b} \begin{aligned}\norm{\partial_\nu v}_{L^2((0,T)\times\partial M)}+\norm{v}_{\mathcal C([0,T];H^1(M))}&\leq C\left(\norm{R}_{L^1(0,T;L^2( M))}+\norm{q_{F,G}v}_{L^1(0,T;L^2( M))}\right)\\
\ &\leq C\left(\norm{R}_{L^2(0,T;L^2( M))}+\norm{q_{F,G}}_{L^1(0,T;L^3( M))}\norm{v}_{L^\infty(0,T;H^1( M))}\right)\\
\ &\leq C\norm{R}_{L^2(0,T;L^2( M))}\\
\ &\leq C\norm{u_{F,G+H}-u_{F,G}}_{L^\infty((0,T)\times M)}^2.\end{aligned}\ee
On the other hand, $y:=u_{F,G+H}-u_{F,G}$ solves the problem
\bel{eqli3}
\left\{ \begin{array}{ll}  \pd_t^2 y-\Delta_{g}y +V y=  0, & \mbox{in}\ (0,T)\times M,\\  
y  =  h, & \mbox{on}\ (0,T)\times \partial M,\\  
y(0,\cdot)  =h_0,\quad  \pd_ty(0,\cdot)  =h_1 & \mbox{in}\ M,
\end{array} \right.
\ee
with 
$$V(t,x):=\int_0^1\partial_uF(t,x,u_{F,G}(t,x) +s(u_{F,G+H}(t,x)-u_{F,G}(t,x)))ds.$$
Using the fact that $u_{F,G},\ u_{F,G+H}\in W^{1,\frac{b}{b-1}}(0,T;H^2(M))\subset W^{1,\frac{b}{b-1}}(0,T;L^\infty(M))$, we deduce that $V\in W^{1,\frac{b}{b-1}}(0,T;L^\infty(M))$. Thus, $y_1=\partial_ty$ solves
$$\left\{ \begin{array}{ll}  \pd_t^2 y_1-\Delta_{g}y_1 +V y_1=\partial_t V y  , & \mbox{in}\ (0,T)\times M,\\  
y_1  =  \partial_th, & \mbox{on}\ (0,T)\times \partial M,\\  
y_1(0,\cdot)  =h_1,\quad  \pd_ty_1(0,\cdot)  =\Delta_gh_0-V (0,\cdot)h_0 & \mbox{in}\ M,
\end{array} \right.$$
where one can check that
$$V (0,x)=\int_0^1\partial_uF(0,x,u_0(x)+sh_0(x))ds,\quad x\in M.$$
Combining this with the fact that  $G,H=(h,h_0,h_1)\in\mathcal H(0,T')$, we deduce from \cite[Proposition 1]{HK}  that this problem admits a unique solution $y_1\in\mathcal C([0,T];H^1(M))\cap \mathcal C^1([0,T];L^2(M))$, satisfying
$$\begin{aligned}\norm{y_1}_{\mathcal C^1([0,T];L^2(M))}&\leq C\left(\norm{H}_\mathcal H+\norm{\partial_t V y}_{L^{\frac{b}{b-1}}(0,T;L^2(M))}\right)\\
&\leq C\left(\norm{H}_\mathcal H+ \norm{V}_{W^{1,\frac{b}{b-1}}(0,T;L^\infty (M))}\norm{y}_{\mathcal C([0,T];L^2(M))}\right)\\
\ &\leq C\norm{H}_\mathcal H,\end{aligned}$$
with $C$ depending only on $c_1$, $T'$, $b$, $G$, $M$, $\epsilon$ and $T$. Note that here we use the fact that for $\norm{H}_\mathcal H\leq \epsilon $, $\norm{V}_{L^\infty((0,T)\times M)}$ and $\norm{V}_{W^{1,\frac{b}{b-1}}(0,T;L^\infty (M))}$ are upper bounded by a constant depending only on $T'$, $c_1$, $b$, $\epsilon$, $G$, $T$ and $M$. We apply also here the fact that the restriction operator $f\mapsto f_{|(0,T)\times\partial M}$ is a continuous map from $H^{\frac{11}{2}}((0,T')\times\partial M)$ to $H^{\frac{11}{2}}((0,T)\times\partial M)$. Thus, we have $y\in \mathcal C^2([0,T]; L^2(M))$ and
$$\norm{\Delta_g y}_{\mathcal C([0,T];L^2(M))}\leq \norm{\partial_t^2y}_{\mathcal C([0,T];L^2(M))}+\norm{V}_{L^\infty((0,T)\times M)}\norm{y}_{\mathcal C([0,T];L^2(M))}\leq C\norm{H}_\mathcal H.$$
Combining this with the fact that for all $t\in[0,T]$, $y(t,\cdot)$ solves the boundary value problem
$$\left\{ \begin{array}{ll}  -\Delta_{g}y(t,\cdot) =-\pd_t^2 y(t,\cdot)-V y(t,\cdot)  , & \mbox{in}\  M,\\  
y(t,\cdot)  =  h(t,\cdot), & \mbox{on}\  \partial M,
\end{array} \right.$$
we deduce that $y\in \mathcal C([0,T];H^2(M))$ satisfies the estimate
$$\norm{y}_{\mathcal C([0,T];H^2(M))}\leq C\norm{H}_{\mathcal H}.$$
Then, by the Sobolev embedding theorem, we obtain
$$\norm{u_{F,G+H}-u_{F,G}}_{L^\infty((0,T)\times M)}=\norm{y}_{L^\infty((0,T)\times M)}\leq C\norm{H}_{\mathcal H}$$
and, from \eqref{t2b}, we get 
$$\begin{aligned}&\norm{\partial_\nu u_{F,G+H}-\partial_\nu u_{F,G}-\partial_\nu w}_{L^2((0,T)\times\partial M)}+\norm{u_{F,G+H}-u_{F,G}-w}_{\mathcal C([0,T];H^1(M))}\\
&\leq C\norm{H}_{\mathcal H}^2.\end{aligned}$$
This proves that $\mathcal B_{F,\gamma_1}$ is Fr\'echet  differentiable at $G$ and $$\mathcal B_{F,\gamma_1}'(G)H=({\partial_\nu w}_{|(0,T)\times\gamma},w(T,\cdot)_{|M})=\mathcal D_{q_{F,G},\gamma_1}H.$$
Now let us prove the continuity of the map $G\mapsto \mathcal B_{F,\gamma_1}'(G)=\mathcal D_{q_{F,G},\gamma_1}$. For this purpose, we fix $z:=w_K-w$, with $K=(k,k_0,k_1)\in\mathcal H(0,T')$ and $w_K$ solving \eqref{eqli1} with $q=q_{F,G+K}$,
$$\norm{H}_{H^{\frac{5}{2}}((0,T)\times\partial M)\times H^{\frac{5}{2}}( M)\times H^{\frac{3}{2}}( M)}+\norm{K}_{H^{\frac{5}{2}}((0,T)\times\partial M)\times H^{\frac{5}{2}}( M)\times H^{\frac{3}{2}}( M)}+\norm{H}_\mathcal H+\norm{K}_\mathcal H\leq \epsilon.$$
We remark that $z$ solves the problem
\bel{eqli4}
\left\{ \begin{array}{ll}  \pd_t^2 z-\Delta_{g}z +q_{F,G}z=  S, & \mbox{in}\ (0,T)\times M,\\  
z  =  0, & \mbox{on}\ (0,T)\times \partial M,\\  
z(0,\cdot)  =0,\quad  \pd_tz(0,\cdot)  =0 & \mbox{in}\ M.
\end{array} \right.
\ee
with 
$$S=-(q_{F,G+K}-q_{F,G})w_K.$$
On the other hand, we can prove that $\norm{w_K}_{\mathcal C([0,T];H^2(M))}\leq C$ with $C$ depending on $a_1$, $b$, $G$, $\epsilon$, $M$, $T'$, $T$. Therefore, we find
\bel{t2d}\norm{S}_{L^2((0,T)\times M)}\leq C\norm{q_{F,G+K}-q_{F,G}}_{L^\infty((0,T)\times M)}.\ee
Using the Taylor expansion of $\partial_uF$ in $u$, we find
$$q_{F,G+K}(t,x)-q_{F,G}(t,x)=\left(\int_0^1\partial_u^2F(t,x,u_{F,G}+s(u_{F,G+K}-u_{F,G}))ds\right)(u_{F,G+K}-u_{F,G})$$
and repeating the above arguments, we obtain
$$\norm{q_{F,G+K}-q_{F,G}}_{L^\infty((0,T)\times M)}\leq C\norm{K}_\mathcal H.$$
Combining this with \eqref{t2d} and the estimate
$$\norm{\partial_\nu z}_{L^2((0,T)\times\partial M)}+\norm{ z}_{\mathcal C([0,T];H^1( M))}\leq C\norm{S}_{L^2((0,T)\times M)},$$
we deduce the continuity of $G\mapsto \mathcal B_{F,\gamma_1}'(G)=\mathcal D_{q_{F,G},\gamma_1}$. This completes the proof of the theorem.\end{proof}

\section{Recovery of a time-dependent coefficient on parts of the boundary}

For $T\in(0,T']$ and $q\in L^\infty((0,T)\times M)$ we consider the initial boundary value problem 
\bel{eq1}
\left\{ \begin{array}{ll}  \pd_t^2 u-\Delta_{g} u + q u  =  0, & \mbox{in}\ (0,T)\times M,\\  
u  =  f, & \mbox{on}\ (0,T)\times \partial M,\\  
 u(0,\cdot)  =0,\quad  \pd_tu(0,\cdot)  =0 & \mbox{in}\ M,
\end{array} \right.
\ee
with non-homogeneous Dirichlet data $f$. According to \cite{LLT}, for $f\in H^1((0,T)\times\partial M)$ satisfying $f_{|t=0}=0$ this problem admits a unique solution $u\in \mathcal C([0,T];H^1(M))\cap \mathcal C^1([0,T];L^2(M))$ satisfying $\partial_\nu u\in L^2((0,T)\times\partial M)$. Thus, fixing $\gamma$ an open portion of $\partial M$, we can define the partial hyperbolic Dirichlet-to-Neumann map in the following way
\[\Lambda_{q,\gamma,*}:\mathcal H_{*,\gamma}(0,T')\ni f\mapsto \partial_\nu u_{|(0,T)\times\gamma},\]
with $\mathcal H_{*,\gamma}(0,T'):=\{f\in \mathcal H_*(0,T'):\ \textrm{supp}(f)\subset(0,T']\times\gamma\}$ and with $u$ solving problem \eqref{eq1}. In this section, we consider the problem of recovering $q$ restricted to $(0,T)\times\gamma$ from the knowledge of $\Lambda_{q,\gamma,*}$.

\begin{thm}\label{t3} 
 Let $(M,g)$ be  a smooth connected and compact Riemannian manifold of dimension $n\geq2$ and let $q_1$, $q_2\in   \mathcal C([0,T]\times M)\cap H^2((0,T)\times M)$. 
Then $\Lambda_{q_1,\gamma,*} = \Lambda_{q_2, \gamma,*}$
implies that $q_1 = q_2$ on $(0,T)\times \gamma$.
\end{thm}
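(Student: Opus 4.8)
The plan is to reduce the boundary determination to a single Green-type integral identity and then to test it against a family of solutions that concentrate, as a large parameter $\lambda\to\infty$, in a shrinking boundary layer around an arbitrary point $(t_0,x_0)\in(0,T)\times\gamma$.

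\textbf{Step 1: the integral identity.} Fix $f\in\mathcal H_{*,\gamma}(0,T')$ and let $u_j$ be the solution of \eqref{eq1} with $q=q_j$ and Dirichlet data $f$. Setting $u:=u_1-u_2$, one has $\pd_t^2u-\Delta_gu+q_1u=(q_2-q_1)u_2$ in $(0,T)\times M$, with $u=0$ on $(0,T)\times\partial M$ and $u(0,\cdot)=\pd_tu(0,\cdot)=0$. The hypothesis $\Lambda_{q_1,\gamma,*}=\Lambda_{q_2,\gamma,*}$ yields $\pd_\nu u=0$ on $(0,T)\times\gamma$. Now let $v$ solve the \emph{backward} problem $\pd_t^2v-\Delta_gv+q_1v=0$ with $v(T,\cdot)=\pd_tv(T,\cdot)=0$ and boundary trace supported in $(0,T)\times\gamma$. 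Pairing with $\bar v$ and integrating by parts, all time-boundary terms vanish by the initial/final conditions, while the lateral term reduces to $\int_0^T\int_{\partial M}(\bar v\,\pd_\nu u-u\,\pd_\nu\bar v)$, which is zero because $u_{|\partial M}=0$ and $\bar v$ is supported where $\pd_\nu u=0$. Hence
\[\int_0^T\!\!\int_M(q_2-q_1)\,u_2\,\bar v\;\d V_g\,\d t=0\]
for every admissible pair $(u_2,v)$ of this type.

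\textbf{Step 2: concentrating solutions.} Fix $(t_0,x_0)\in(0,T)\times\gamma$ and pass to boundary normal coordinates $(x',x_n)$ near $x_0$, in which $g=\d x_n^2+\sum g_{\alpha\beta}(x',x_n)\,\d x'^\alpha\d x'^\beta$ and $x_n\geq0$ is the distance to $\partial M$. Choosing a tangential covector $\omega$, the complex phase $\phi$ with $\pd_{x'}\phi=\omega$ and $\pd_{x_n}\phi=i\beta$, $\beta=\beta(x',x_n)>0$ fixed by the eikonal equation of $\Delta_g$, produces \emph{evanescent} waves $e^{i\lambda\phi}$ decaying like $e^{-\lambda\beta x_n}$ into $M$. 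The plan is to build, for $q_2$ (forward, zero initial data) and for $q_1$ (backward, zero final data), genuine solutions of the form
\[u_2=\psi\,e^{i\lambda\phi}(a_0+\lambda^{-1}a_1)+w_2,\qquad v=\psi\,e^{i\lambda\phi}(b_0+\lambda^{-1}b_1)+w_v,\]
where $\psi$ is a cutoff localizing to a small neighbourhood of $(t_0,x_0)$ inside $(0,T)\times\gamma$ (in particular vanishing near $t=0,T$, so the initial/final conditions and the support condition on $\gamma$ hold automatically), the amplitudes are fixed by the transport equations coming from the metric, and $w_2,w_v$ are remainders. The bounded potential $q_j$ is pushed into the lower-order amplitude equations and the residual; the two-term expansion is needed to gain the extra power of $\lambda$ that makes $\|w_2\|_{L^2},\|w_v\|_{L^2}=o(\lambda^{-1/2})$, controlled through the hidden-regularity Neumann-trace estimate of \cite{LLT} together with residual bounds. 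The second transport equation involves two spatial derivatives of an amplitude built from $q_j$, which is exactly where the $H^2$ regularity of $q_j$ is used to close the estimate.

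\textbf{Step 3: localization and passage to the limit.} Since $u_2$ and $v$ carry the same principal phase, the oscillatory factors cancel in the product and $u_2\bar v=e^{-2\lambda\beta x_n}|\psi|^2\,a_0\bar b_0+(\text{lower order})$, a weight concentrating on $\{x_n=0\}$. Inserting into the identity of Step 1, normalizing by $2\lambda\beta$ and letting $\lambda\to\infty$, the weight $2\lambda\beta\,e^{-2\lambda\beta x_n}$ converges to the boundary measure and gives
\[\int_{(0,T)\times\partial M}(q_2-q_1)\big|_{\partial M}\,|\psi|^2\,a_0\bar b_0\;\d\sigma=0 .\]
Choosing $a_0,b_0$ nonvanishing at $(t_0,x_0)$ and shrinking the support of $\psi$ to $(t_0,x_0)$, the continuity of $q_1,q_2$ forces $(q_1-q_2)(t_0,x_0)=0$. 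As $(t_0,x_0)$ is arbitrary in $(0,T)\times\gamma$, this proves $q_1=q_2$ on $(0,T)\times\gamma$.

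\textbf{Main obstacle.} The crux is Step 2: realizing the evanescent profiles as \emph{exact} solutions of the initial boundary value problem, \emph{globally} on $M$, with a remainder that stays negligible after the boundary-concentration rescaling, while $q_j$ is merely of class $H^2\cap\mathcal C$. The low regularity forbids a long smooth WKB expansion, so one must keep the metric part in the principal symbol, absorb $q_ju$ and all cutoff commutators into the residual, and stop precisely at the order compatible with $q_j\in H^2$. Handling the complex (non-real) transport vector field arising from the evanescent branch, and ensuring the profile does not interact with the rest of $\partial M$ nor with $t=0,T$, is the technical heart of the argument.
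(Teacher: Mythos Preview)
Your strategy is genuinely different from the paper's. The paper does \emph{not} use an integral identity nor evanescent profiles: it builds, for each $j$, a \emph{propagating} geometric-optics solution $u_j=e^{i\rho(t-\psi(x))}\big(a_0+\rho^{-1}a_{j,1}+\rho^{-2}a_{j,2,\rho}\big)+R_{j,\rho}$ with the \emph{real} phase $\psi(x)=\mathrm{dist}(x,\partial M)$, chooses the same Dirichlet trace $f=a_0|_{\partial M}$ for both, and simply reads off $q_1-q_2$ from the difference of Neumann data. Because $a_{j,1}|_{\partial M}=0$ and $(\partial_\nu a_{1,1}-\partial_\nu a_{2,1})(t_0,x_0)=\tfrac{i}{2}(q_1-q_2)(t_0,x_0)$, the relation $\partial_\nu u_1=\partial_\nu u_2$ on $(0,T)\times\gamma$ forces the conclusion once one shows $\rho\|\partial_\nu R_{j,\rho}\|_{L^2}\to0$. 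The low regularity $q_j\in H^2\cap\mathcal C$ is handled by a device you do not have: the third amplitude $a_{j,2,\rho}$ is constructed from a \emph{mollified} potential $q_{j,\rho}$ (Lemma~\ref{appro}) with calibrated growth $\|q_{j,\rho}\|_{H^\ell}\lesssim\rho^{(\ell-2)/(n+2)}$, so that the loss from $\|a_{j,2,\rho}\|_{H^2}$ is beaten by the explicit $\rho^{-2}$ in front.

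Your outline, by contrast, leaves Step~2 essentially open---and you say so yourself. Two concrete difficulties are not addressed. First, the phase you specify is purely spatial, with $\partial_{x_n}\phi=i\beta$ ``fixed by the eikonal equation of $\Delta_g$''; but the relevant eikonal is that of $\Box_g$, namely $(\partial_t\phi)^2=|\nabla_g\phi|_g^2$, so either a time oscillation $e^{i\lambda\tau t}$ must enter the phase (and then the localizing cutoff $\psi(t,x')$ produces $O(\lambda)$ commutator terms $2i\lambda\tau\,\partial_t\psi$ that must be absorbed into the transport hierarchy, which you do not discuss), or the ansatz is time-independent, in which case the zero initial/final conditions cannot be met without destroying the approximation. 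Second, the transport vector field attached to a complex phase is itself complex, so the transport equations are no longer solved by integrating along real curves; you allude to this (``complex (non-real) transport vector field'') but give no construction. Until these points are settled, the remainder bound $\|w\|_{L^2}=o(\lambda^{-1/2})$ is an assertion, not an estimate. In short: the integral-identity/evanescent route is plausible in spirit, but as written it is a sketch of a programme rather than a proof, and it misses the specific low-regularity mechanism (mollification of $q_j$ inside the highest-order amplitude) that makes the paper's argument close.
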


We mention that \cite{SU} established  results similar to Theorem \ref{t3} for time-independent coefficients and \cite{SY} treated the case of time-dependent coefficients from some measurements associated with some general hyperbolic equation on a Lorentzian manifold. Both of these results require strong smoothness assumptions on the coefficients under consideration. In Theorem \ref{t3}, we extend such results  to time-dependent potentials $q$ lying in $\mathcal C([0,T]\times M)\cap H^2((0,T)\times M)$. To prove this result, like in \cite{SU,SY}, we consider specific solutions of the problem \eqref{eq1} also called geometric optics. However, since we restrict the regularity of the coefficients under consideration, in contrast to \cite{SU,SY},  we will use a new global construction involving some  approximation of the potential $q$. The properties of these solutions will be stated in Proposition \ref{p2}. We mention that the recovery of coefficients lying in  $\mathcal C([0,T]\times M)\cap H^2((0,T)\times M)$ will be a crucial point in the proof of Theorem \ref{t1}.

\subsection{Geometric optics solutions }
Let $t_0\in (0,T)$, $x_0\in\partial M$ and consider $\delta>0$ a constant that will be fixed later. The goal of this subsection is to construct some energy class solutions $u_j$ of the equation
\bel{eq3}\left\{ \begin{array}{ll}  \pd_t^2 u_{j}-\Delta_{g} u_j+ q_j u_j  =  0, & \mbox{in}\ (0,T)\times M,\\  
u_j  =  f, & \mbox{on}\ (0,T)\times \partial M,\\  
 u_j(0,\cdot)  =0,\quad  u_j(0,\cdot)  =0 & \mbox{in}\ M,
\end{array} \right.
\ee
with some suitable choice of $f\in\mathcal C^\infty_0((0,T]\times \pd M)$. More precisely, we prove the following.

\begin{prop}\label{p2} 
 For $j=1,2$ and for  $\rho>1$, there exists $f\in\mathcal C^3_0((0,T]\times \pd M)$ such that the solution $u_j\in \mathcal C([0,T];H^1(M))\cap \mathcal C^1([0,T];L^2(M))$ of \eqref{eq3} has a restriction on $[0,t_0+\delta]\times  M$ taking the form
\bel{GO} u_j(t,x)= e^{i\rho(t-\psi(x))} \left(a_0(t,x)+\frac{a_{j,1}(t,x)}{\rho}+\frac{a_{j,2,\rho}(t,x)}{\rho^2}\right)+R_{j,\rho}(t,x), \quad \rho > 1.
\ee
Here we assume that $\psi$ is a smooth function on the support of $a_0$, $a_{j,1}$, $a_{j,2,\rho}$.  Moreover, the function $a_0$, $a_{j,1}$, $a_{j,2,\rho}\in H^2((0,t_0+\delta)\times  M)$ satisfy the conditions
\bel{bound}a_{1,1}(t,x)=a_{2,1}(t,x)=0,\quad (t,x)\in(0,t_0+\delta)\times\partial M,\ee
\bel{bound1}a_{1,2,\rho}(t,x)=a_{2,2,\rho}(t,x)=0,\quad (t,x)\in(0,t_0+\delta)\times\partial M,\ee
\bel{supp}\textrm{supp}\left(a_0|_{[0,t_0+\delta]\times  \pd M}\right)\subset [0,t_0+\delta]\times  \gamma,\ee
\bel{bounded1} \norm{a_{j,2,\rho}}_{H^2((0,t_0+\delta)\times M)}\leq C\rho^{\frac{n}{n+2}},\ee
with $C>0$ independent of $\rho$.
In addition, we have $\partial_\nu a_{j,1}\in \mathcal C([0,t_0+\delta]\times  \partial M)$, $j=1,2$, with
\bel{bounded}\partial_\nu a_{1,1}(t_0,x_0)-\partial_\nu a_{2,1}(t_0,x_0)=\frac{i}{2}[q_1(t_0,x_0)-q_2(t_0,x_0)].\ee
Finally, the remainder term $R_{j,\rho}\in \mathcal C([0,t_0+\delta];H^1(M))\cap \mathcal C^1([0,t_0+\delta];L^2(M))$ satisfies
$$\partial_t^2R_{j,\rho}-\Delta_gR_{j,\rho}\in L^2((0,t_0+\delta)\times M),$$
\begin{align}
\label{GO1}
&R_{j,\rho}=0 \textrm{ on }(0,t_0+\delta)\times\pd M,\quad R_{j,\rho}(0,\cdot)=\pd_tR_{j,\rho}(0,\cdot)=0 \textrm{ on } M,
\\\label{GO2}
&\lim_{\rho\to+\infty}  \rho\norm{\partial_\nu R_{j,\rho}}_{L^2((0,t_0+\delta)\times \partial M)}=0.
\end{align}
\end{prop}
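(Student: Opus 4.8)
The plan is to build $u_j$ as a WKB (geometric optics) solution concentrated near the boundary point $(t_0,x_0)$. First I would pass to boundary normal coordinates $(x',x_n)$ in a collar of $\partial M$, in which $g=dx_n^2+h_{\alpha\beta}(x',x_n)dx^\alpha dx^\beta$ and the boundary distance $\psi(x):=x_n$ solves the eikonal equation $|\nabla_g\psi|_g^2=g^{nn}=1$; this makes the phase $\phi:=t-\psi$ annihilate the principal ($\rho^2$) symbol of $\Box_g$ applied to $e^{i\rho\phi}$. Plugging the ansatz $e^{i\rho\phi}(a_0+a_{j,1}/\rho+a_{j,2,\rho}/\rho^2)$ into $(\Box_g+q_j)u=0$ and collecting powers of $\rho$, I obtain the transport operator $\mathcal{T}a:=2(\partial_t a+\langle\nabla_g\psi,\nabla_g a\rangle_g)+(\Delta_g\psi)a$ and the cascade
\begin{equation*}
\mathcal{T}a_0=0,\qquad \mathcal{T}a_{j,1}=i(\Box_g+q_j)a_0,\qquad \mathcal{T}a_{j,2,\rho}=i(\Box_g+q_j)a_{j,1},
\end{equation*}
the leftover residual being of order $\rho^{-2}$. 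Since $\nabla_g\psi=\partial_{x_n}$, the operator $\mathcal{T}$ transports along the lines $s\mapsto(t_0+s,x',s)$, so each equation is an ODE solved by integration from the boundary inward.

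Next I would solve this cascade. I take the boundary trace of $a_0$ to be a fixed smooth function supported near $(t_0,x_0)$ inside $[0,t_0+\delta]\times\gamma$, normalized so that $a_0(t_0,x_0)=1$, and solve $\mathcal{T}a_0=0$ inward; because this datum is supported away from $t=0$ and the characteristics advance $t$ and $x_n$ at unit speed, the resulting $a_0$ (and hence $a_{j,1},a_{j,2,\rho}$) penetrates only to depth of order $\delta$ and vanishes in a neighborhood of $\{t=0\}$. In particular $a_0$ does not depend on $j$, which produces the common leading amplitude. For the correctors I impose the homogeneous conditions \eqref{bound}, \eqref{bound1}, i.e. I solve $\mathcal{T}a_{j,1}=i(\Box_g+q_j)a_0$ with $a_{j,1}|_{\partial M}=0$ and likewise for $a_{j,2,\rho}$. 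Two consequences follow. First, on $\partial M$ only the $a_0$ term survives, so the Dirichlet datum is the smooth function $f=e^{i\rho t}\,a_0|_{(0,T)\times\partial M}\in\mathcal{C}^3_0$ supported in $\gamma$, matching \eqref{eq3} and \eqref{supp}. Second, evaluating the transport equation for $a_{j,1}$ on $\partial M$, where $a_{j,1}$ together with its tangential and time derivatives vanish, leaves $2\partial_{x_n}a_{j,1}=i(\Box_g+q_j)a_0$; subtracting the two values of $j$ at $(t_0,x_0)$, using $a_0(t_0,x_0)=1$ and that the common $\Box_g a_0$ term cancels in the difference, yields \eqref{bounded} (with $\partial_\nu=-\partial_{x_n}$ on $\partial M$). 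Continuity of $\partial_\nu a_{j,1}$ on the boundary follows from this relation since $q_j\in\mathcal{C}([0,T]\times M)$.

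The delicate point, and the one I expect to be the main obstacle, is the last corrector $a_{j,2,\rho}$: its source $(\Box_g+q_j)a_{j,1}$ contains $\Box_g a_{j,1}$, which is only $L^2$ because $a_{j,1}$ is no smoother than $q_j\in H^2$, so one cannot directly produce $a_{j,2,\rho}\in H^2$. To bypass the low regularity I would keep $a_{j,1}$ exact (leaving \eqref{bounded} untouched) but replace the rough source by a mollified one $(\Box_g+q_j)a_{j,1}^{(\epsilon)}$, with $a_{j,1}^{(\epsilon)}$ a regularization of $a_{j,1}$ at scale $\epsilon=\epsilon(\rho)\to0$; then $a_{j,2,\rho}$ solves a transport equation with $H^2$ source of size $\lesssim\epsilon^{-2}$, whence $\|a_{j,2,\rho}\|_{H^2}\lesssim\epsilon^{-2}$, and the choice $\epsilon\simeq\rho^{-n/(2(n+2))}$ gives \eqref{bounded1}. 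With the amplitudes fixed, I set $R_{j,\rho}:=u_j-e^{i\rho\phi}(a_0+a_{j,1}/\rho+a_{j,2,\rho}/\rho^2)$ on $[0,t_0+\delta]\times M$; by construction it solves $\Box_g R_{j,\rho}+q_jR_{j,\rho}=G_{j,\rho}$ with the zero Cauchy and boundary data \eqref{GO1}, the source splitting into an order $\rho^{-1}$ piece $\tfrac{1}{\rho}(\Box_g+q_j)(a_{j,1}-a_{j,1}^{(\epsilon)})$ coming from the mollification and an order $\rho^{-2}$ piece $\tfrac{1}{\rho^2}(\Box_g+q_j)a_{j,2,\rho}$.

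Finally I would estimate $\partial_\nu R_{j,\rho}$. Using $\|a_{j,1}-a_{j,1}^{(\epsilon)}\|_{H^2}\to0$ and $\|a_{j,2,\rho}\|_{H^2}\lesssim\rho^{n/(n+2)}$, the two pieces give $\|G_{j,\rho}\|_{L^2((0,t_0+\delta)\times M)}=o(\rho^{-1})+O(\rho^{-2+n/(n+2)})$. The energy estimate together with the boundary trace (``hidden'') regularity for the wave equation, in the form $\|\partial_\nu R_{j,\rho}\|_{L^2((0,t_0+\delta)\times\partial M)}\leq C\|G_{j,\rho}\|_{L^1(0,t_0+\delta;L^2(M))}$ (the estimate already invoked through \cite{LLT}, \cite[Theorem A.2]{BCY} and \cite[Proposition 1]{HK} in the proof of Theorem \ref{t2}), then yields
\begin{equation*}
\rho\,\|\partial_\nu R_{j,\rho}\|_{L^2((0,t_0+\delta)\times\partial M)}\leq o(1)+C\rho^{-2/(n+2)}\xrightarrow[\rho\to\infty]{}0,
\end{equation*}
which is \eqref{GO2}; the same argument gives $R_{j,\rho}\in\mathcal{C}([0,t_0+\delta];H^1(M))\cap\mathcal{C}^1([0,t_0+\delta];L^2(M))$ with $\Box_g R_{j,\rho}\in L^2$. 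The only geometric care needed is that the amplitudes remain in the collar where $\psi=x_n$ is smooth, which is guaranteed by the shallow penetration noted above, so no interior cutoff (and hence no associated commutator of order $\rho$) is required. The crux is thus the regularization scale that simultaneously controls the growth in \eqref{bounded1} and the remainder decay in \eqref{GO2}, which is precisely where the restriction to low-regularity coefficients $q_j\in\mathcal{C}([0,T]\times M)\cap H^2((0,T)\times M)$ makes itself felt.
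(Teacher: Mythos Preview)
Your proposal is correct and follows essentially the same geometric-optics strategy as the paper: boundary normal coordinates with $\psi=x_n$, the transport cascade for $a_0,a_{j,1},a_{j,2,\rho}$ with vanishing boundary data on the correctors, the derivation of \eqref{bounded} by reading off the transport equation for $a_{j,1}$ at the boundary, and a mollification at scale $\epsilon(\rho)$ to manufacture an $H^2$ second corrector while keeping the residual $o(\rho^{-1})$. The only notable variation is in what gets mollified: the paper regularizes the potential $q_j$ (via an explicit approximation lemma producing $q_{j,\rho}$ with $\|q_{j,\rho}\|_{H^\ell}\lesssim\rho^{(\ell-2)/(n+2)}$) and then builds a smooth surrogate $a_{j,1,\rho}$ by solving the transport equation with $q_{j,\rho}$ in place of $q_j$, whereas you mollify $a_{j,1}$ directly; both routes yield the bound \eqref{bounded1} and the decay \eqref{GO2}. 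The paper also writes everything in the null coordinates $s_1=t+x_n-t_0$, $s_2=t-x_n-t_0$, which makes the integration along characteristics explicit, but this is a presentational rather than a substantive difference.
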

\begin{proof} 
In order to get the decay \eqref{GO2}, we choose $\psi$, $a_0$, $a_{j,1}$ and $a_{j,2,\rho}$, $j=1,2$, so that they satisfy the following eikonal and transport equations
\bel{psi}\sum_{i,j=1}^dg^{ij}(x)\pd_{x_i}\psi\pd_{x_j}\psi=|\nabla_g\psi|^2_g=1,\ee
\bel{a0}  2i\pd_ta_0+2i\sum_{i,j=1}^dg^{ij}(x)\pd_{x_i}\psi\pd_{x_j}a_0+i(\Delta_g \psi)a_0=0,\ee
\bel{a1}  2i\pd_ta_{k,1}+2i\sum_{i,j=1}^dg^{ij}(x)\pd_{x_i}\psi\pd_{x_j}a_{k,1}+i(\Delta_g \psi)a_{k,1}=-(\partial_t^2-\Delta_g+q_k)a_0,\quad k=1,2,\ee
\bel{a2}  2i\pd_ta_{k,2,\rho}+2i\sum_{i,j=1}^dg^{ij}(x)\pd_{x_i}\psi\pd_{x_j}a_{k,2,\rho}+i(\Delta_g \psi)a_{k,2,\rho}=-(\partial_t^2-\Delta_g+q_k)a_{k,1, \rho},\quad k=1,2,\ee
on some neighborhood of $[0,t_0+\delta]\times \partial M$. Here $a_{k,1, \rho}$ is a smooth approximation of $a_{k,1}$ that we will precise later. Using some suitable coordinates we will introduce solutions of the equations \eqref{psi}-\eqref{a2} satisfying \eqref{bound}-\eqref{bounded1}. 

From now on, for any $y\in M$ and $\theta\in S_yM$, we denote by  $\gamma_{y,\theta}$ the maximal geodesic starting at $y$ in the direction $\theta$. Then, for some $\epsilon>0$ small enough, we define the map $\exp_{\partial M}:\partial M\times[0,\epsilon)\longrightarrow M$ given by
$$\exp_{\partial M}(x',x_n):=\gamma_{x',-\nu(x')}(x_n),\quad (x',x_n)\in \partial M\times[0,\epsilon).$$
For any $r>0$, we define the submanifold $M_r:=\{x\in M:\ \textrm{dist}(x,\partial M)< r\}$. It is well known (e.g. \cite[Section 2.1.16]{KKL}) that, for $\epsilon$ sufficiently small, $\exp_{\partial M}$ is a diffeomorphism from 
$\partial M\times[0,\epsilon)$ to $M_\epsilon$ with $$\exp_{\partial M}^{-1}(x):=(x',x_n),\quad x_n=\textrm{dist}(x,\partial M),\quad x\in M_\epsilon.$$
Here $\textrm{dist}$ denotes the Riemanian distance function on $(M,g)$.
Thus,  we can consider the boundary normal coordinates  $(x',x_n)$ on $M_\epsilon$ given by $x=\exp_{\partial M}(x',x_n)$ where $x_n\geq0$ and $x'\in\partial M$.  It is well known  (see e.g. \cite[Section 2.1.18]{KKL}) that in these coordinates the metric takes the form $g(x',x_n)=g_0(x',x_n)+dx_n^2$ with $g_0(x',x_n)$ a metric on $\partial M$ that depends smoothly on $x_n$. We choose 
\begin{align}
\label{psi1}
\psi(x)=\textrm{dist}(x,\partial M), \quad x \in M_\epsilon.
\end{align}
As $\psi$ is given by $x_n$ in the boundary normal coordinates,  one can easily check that $\psi$ solves \eqref{psi} in $M_\epsilon$. 

\
Let us now turn to the transport equations \eqref{a0}-\eqref{a2}. We fix $\delta\in\left(0,\frac{\min(\epsilon,t_0,T-t_0)}{16}\right)$. From now on, we use the coordinates $s_1=t+x_n-t_0$, $s_2=t-x_n-t_0$ and, for $s_1\in [-t_0,5\delta],\ s_2\in[s_1-8\delta,s_1],\ x'\in\partial M$, we   write $$a(s_1,s_2,x')=a\left(\frac{s_1+s_2}{2},\exp_{\partial M}\left(x',\frac{s_1-s_2}{2}\right)\right).$$ 
We will use this notation to indicate the representation in these coordinates also for other functions. Note that in these coordinates the boundary $\partial M$ will be given by $s_1=s_2$ which corresponds in boundary normal coordinates to $x_n=0$. Moreover, the manifold $[0,t_0+\delta]\times M$ will be contained into the set \bel{coM}\{ (s_1,s_2,x')\in\R\times\R\times\pd M:\ s_2\leq s_1,\ -t_0\leq s_1+s_2\leq \delta\}.\ee
Fixing $\beta=\textrm{det}g_0$, one can check that, in the coordinates $(s_1,s_2,x')$, \eqref{a0} becomes
\[ 
2\pd_{s_1}a_0+\left({\pd_{s_1}\beta -\pd_{s_2}\beta\over 4\beta}\right)a_0=0.\] 
Then,  we consider $\chi\in\mathcal C^\infty_0((-2\delta,2\delta))$  such that $\chi=1$ on $[-\delta,\delta]$,  $\chi_1\in\mathcal C^\infty_0((-3\delta,3\delta))$  such that $\chi_1=1$ on $[-2\delta,2\delta]$,   $\phi\in\mathcal C^\infty_0(\gamma)$ such that $\phi=1$ on a neighborhood of $x_0$ and $\phi_1\in\mathcal C^\infty_0(\gamma)$ such that $\phi_1=1$ on a neighborhood of supp$(\phi)$. We choose 
\bel{a00}a_0(s_1,s_2,x'):=\chi(s_2)\phi(x')\beta(s_1,s_2,x')^{-1/4}.\ee
Using the fact that 
$$a_0(s_1,s_2,x')=0,\quad |s_2|>2\delta,\ s_1\in[s_2,3\delta],\ x'\in\partial M$$
we can extend $a_0$ by zero to a function defined on $s_1\in[-t_0-2\delta,3\delta]$, $s_2\in [-t_0-3\delta, s_1]$, $x'\in\pd M$ solving \eqref{a0} on $(0,t_0+\delta)\times M$. Then using the fact that $[0,t_0+\delta]\times M$ is described by \eqref{coM}, we deduce that this extension of $a_0$ corresponds to a function defined on $[0,t_0+\delta]\times M$  in the initial coordinates and lying in $\mathcal C^\infty([0,t_0+\delta]\times M)$.
With this choice of $a_0$, \eqref{a1} is transformed into 
\[ 2\pd_{s_1}a_{j,1}+\left({\pd_{s_1}\beta -\pd_{s_2}\beta\over 4\beta}\right)a_{j,1}=\frac{i}{2}[(\partial_t^2-\Delta_g)a_0+ q_ja_0].\]
We choose
\bel{a11}a_{j,1}(s_1,s_2,x_n):= \frac{i}{2}\chi_1(s_2)\phi_1(x')\beta(s_1,s_2,x')^{-\frac{1}{4}}\left(a_{3}(s_1,s_2,x_n)+ a_{j,3}(s_1,s_2,x')\right),\ee
on $(0,t_0+\delta)\times M_{2\delta}$, where, for $s_1\in [-t_0,5\delta],\ s_2\in[-3\delta,3\delta],\ x'\in\partial M$, we fix
$$a_{j,3}(s_1,s_2,x'):=\chi(s_2)\phi(x')\frac{1}{2}\left(\int_{s_2}^{s_1}\beta^{\frac{1}{4}}q_j(\tau,s_2,x')d\tau\right),\quad a_{3}(s_1,s_2,x')=\frac{1}{2}\left(\int_{s_2}^{s_1}\beta^{\frac{1}{4}}d_1(\tau,s_2,x')d\tau\right),$$
with $d_1=(\partial_t^2-\Delta_g)a_0$.
It is clear that 
$$a_{3}(s_1,s_1,x')= a_{j,3}(s_1,s_1,x')=0.$$
Thus, one can check that \eqref{bound} is fulfilled. Moreover, using the fact that $q\in H^2((0,T)\times M)\cap \mathcal C([0,T]\times M)$, we deduce  that  $a_{j,1}\in H^2((0,t_0+\delta)\times M)$ and $\partial_\nu a_{j,1}\in \mathcal C([0,t_0+\delta]\times \partial M)$, $j=1,2$.
Finally, due to the expression involving $\phi$ in \eqref{a00} one can check \eqref{supp}. Finally, using the fact that 
$$\begin{aligned} &\left(\partial_\nu a_{1,1}-\partial_\nu a_{2,1}\right)(t_0,x_0)\\
&=(\partial_{s_1}-\partial_{s_2})\left[\beta(s_1,s_2,x')^{-\frac{1}{4}}\chi(s_2)\phi(x_0)\frac{i}{4}\left(\int_{s_2}^{s_1}\beta^{\frac{1}{4}}(q_1-q_2)(\tau,s_2,x_0)d\tau\right)\right]_{s_2=s_1=0,}\\
&=\frac{i}{2}\chi(0)\phi(x_0)(q_1-q_2)(0,0,x_0)=\frac{i}{2}(q_1-q_2)(t_0,x_0),\end{aligned}$$
we obtain \eqref{bounded}.

For the construction of $a_{j,2,\rho}$, we need first to define the expression $a_{j,1,\rho}$ which is an approximation of $a_{j,1}$. For this purpose, we consider an approximation of $q_j$ given by the following lemma.
\begin{lem} \label{appro}There exists $q_{j,\rho}\in\mathcal C^\infty([0,T]\times M)$ such that 
\bel{app1} \lim_{\rho\to+\infty} \norm{q_{j,\rho}-q_j}_{H^2((0,T)\times M)}=0,\ee
\bel{app2}\norm{q_{j,\rho}}_{H^{\ell}(\R\times M_1)}\leq C_\ell\rho^{\frac{\ell-2}{n+2}},\ \ell\geq 2.\ee
with $C_\ell$  independent of $\rho$.\end{lem}

We postpone the proof of this result to the end of the present demonstration. Using the result of Lemma \ref{appro}, we fix
$$a_{j,3,\rho}(s_1,s_2,x'):=\chi(s_2)\phi(x')\frac{1}{2}\left(\int_{s_2}^{s_1}\beta^{\frac{1}{4}}q_{j,\rho}(\tau,s_2,x')d\tau\right)$$
 for $s_1\in [-t_0,5\delta],\ s_2\in[-t_0-5\delta,s_1],\ x'\in\partial M$
and we define $a_{j,1,\rho}$ as follows
$$a_{j,1,\rho}(s_1,s_2,x'):= \frac{i}{2}\chi_1(s_2)\phi_1(x')\beta(s_1,s_2,x')^{-\frac{1}{4}}\left(a_{3}(s_1,s_2,x')+ a_{j,3,\rho}(s_1,s_2,x')\right).$$
Then, according to \eqref{app1}-\eqref{app2} and the expression \eqref{a11} of $a_{j,1}$, we have $a_{j,1,\rho}\in\mathcal C^\infty([0,t_0+\delta]\times M)$ with 
\bel{app3} \lim_{\rho\to+\infty} \norm{a_{j,1,\rho}-a_{j,1}}_{H^2((0,t_0+\delta)\times M)}=0,\ee
\bel{app4}\norm{a_{j,1,\rho}}_{H^{\ell}((0,t_0+\delta)\times M_1)}\leq C_\ell\rho^{\frac{\ell-2}{n+2}},\ \ell\geq 2.\ee
Note that in the coordinates $(s_1,s_2,x')$, \eqref{a2} becomes
\bel{a221}
2\pd_{s_1}a_{j,2,\rho}+\left({\pd_{s_1}\beta -\pd_{s_2}\beta\over 4\beta}\right)a_{j,2,\rho}=\frac{i}{2}[(\partial_t^2-\Delta_g)a_{j,1,\rho}+ q_ja_{j,1,\rho}].\ee
Thus, for $s_1\in [-t_0,5\delta],\ s_2\in[-t_0-5\delta,s_1],\ x'\in\partial M$, we fix
\bel{a22}a_{j,2,\rho}(s_1,s_2,x'):= \chi_1(s_2)\phi_1(x')\beta(s_1,s_2,x')^{-\frac{1}{4}}\frac{1}{4i}\left(\int_{s_2}^{s_1}\beta^{\frac{1}{4}}b_{j,1,\rho}(\tau,s_2,x')d\tau\right),\ee
where
$$b_{j,1,\rho}:=-(\partial_t^2-\Delta_g+q_j)a_{j,1,\rho}.$$
In particular, we have $a_{j,2,\rho}\in\mathcal C^\infty([0,t_0+\delta]\times M)$ and \eqref{bound1}.

Combining these properties with the fact that $[0,t_0+\delta]\times M$ is contained into the set \eqref{coM}, we can extend the map
$$G_{j,\rho}:(t,x)\longmapsto e^{i\rho(t-\psi(x))} \left(a_0(t,x)+\frac{a_{j,1}(t,x)}{\rho}+\frac{a_{j,2,\rho}(t,x)}{\rho^2}\right)$$
by zero to a function lying in $H^2((0,t_0+\delta)\times M)$. In addition, \eqref{app3}-\eqref{app4} imply that 
$$ \begin{aligned}&\norm{\partial_t^2G_{j,\rho}-\Delta_gG_{j,\rho}+q_jG_{j,\rho}}_{L^2((0,t_0+\delta)\times M)}\\
&=\norm{\frac{(\partial_t^2-\Delta_g+q_j)(a_{j,1}-a_{j,1,\rho})}{\rho}+\frac{\partial_t^2a_{j,2,\rho}-\Delta_ga_{j,2,\rho}+q_ja_{j,2,\rho}}{\rho^2}}_{L^2((0,t_0+\delta)\times M)}\\
&\leq C\left(\rho^{-1}\norm{a_{j,1}-a_{j,1,\rho}}_{H^2((0,t_0+\delta)\times M)}+\frac{\norm{a_{j,2,\rho}}_{H^2((0,t_0+\delta)\times M)}}{\rho^2}\right).\end{aligned}$$
On the other hand, by considering the explicit expression of $a_{j,2,\rho}$ and applying \eqref{app4}, we get
$$ \begin{aligned}\norm{a_{j,2,\rho}}_{H^2((0,t_0+\delta)\times M)}&\leq C(\norm{a_{j,1,\rho}}_{H^4((0,t_0+\delta)\times M)}+\norm{q_ja_{j,1,\rho}}_{H^2((0,t_0+\delta)\times M)})\\
\ &\leq C(\norm{a_{j,1,\rho}}_{H^4((0,t_0+\delta)\times M)}+\norm{q_j}_{H^2((0,t_0+\delta)\times M)}\norm{a_{j,1,\rho}}_{W^{2,\infty}((0,t_0+\delta)\times M)})\\
\ &\leq C(\norm{a_{j,1,\rho}}_{H^4((0,t_0+\delta)\times M)}+\norm{q_j}_{H^2((0,t_0+\delta)\times M)}\norm{a_{j,1,\rho}}_{H^{n+2}((0,t_0+\delta)\times M)})\\
\ &\leq C\rho^{\frac{n}{n+2}},\end{aligned}$$
which implies \eqref{bounded1}.
Therefore, we find
$$ \begin{aligned}&\norm{\partial_t^2G_{j,\rho}-\Delta_gG_{j,\rho}+q_jG_{j,\rho}}_{L^2((0,t_0+\delta)\times M)}\\
&\leq C\left(\rho^{-1}\norm{a_{j,1}-a_{j,1,\rho}}_{H^2((0,t_0+\delta)\times M)}+\rho^{-\frac{n+4}{n+2}}\right).\end{aligned}$$
and  \eqref{app3} implies 
\bel{dec}\lim_{\rho\to+\infty}\rho \norm{\partial_t^2G_{j,\rho}-\Delta_gG_{j,\rho}+q_jG_{j,\rho}}_{L^2((0,t_0+\delta)\times M)}=0.\ee
We choose $R_{j,\rho}\in \mathcal C([0,t_0+\delta];H^1(M))\cap \mathcal C^1([0,t_0+\delta];L^2(M))$ to be the unique solution of the IBVP
\bel{eq2_R}
\left\{ \begin{array}{ll}  \pd_t^2 R_{j,\rho}-\Delta_{g} R_{j,\rho}+ q_j R_{j,\rho}  =  -(\partial_t^2G_{j,\rho}-\Delta_gG_{j,\rho}+q_jG_{j,\rho}), & \mbox{in}\ (0,t_0+\delta)\times M,\\  
R_{j,\rho}  =  0, & \mbox{on}\ (0,t_0+\delta)\times \partial M,\\  
 R_{j,\rho}(0,\cdot)  =0,\quad  \pd_tR_{j,\rho}(0,\cdot)  =0 & \mbox{in}\ M.
\end{array} \right.
\ee
Applying \cite[Theorem 2.1]{LLT}, we obtain

$$\begin{aligned}\norm{\partial_\nu R_{j,\rho}}_{L^2((0,t_0+\delta)\times \partial M)}&\leq C\left(\norm{\partial_t^2G_{j,\rho}-\Delta_gG_{j,\rho}+q_jG_{j,\rho}}_{L^2((0,t_0+\delta)\times M)}+\norm{ q_jR_{j,\rho}}_{L^2((0,t_0+\delta)\times M)}\right)\\
\ &\leq C\left(\norm{\partial_t^2G_{j,\rho}-\Delta_gG_{j,\rho}+q_jG_{j,\rho}}_{L^2((0,t_0+\delta)\times M)}+\norm{ R_{j,\rho}}_{\mathcal C([0,t_0+\delta]; H^1( M))}\right)\\
\ &\leq C\norm{\partial_t^2G_{j,\rho}-\Delta_gG_{j,\rho}+q_jG_{j,\rho}}_{L^2((0,t_0+\delta)\times M)}\end{aligned}$$
and \eqref{dec} implies \eqref{GO2}.

Using the above properties, we can now complete the construction of the solutions $u_j$ of \eqref{eq3}. 
Note first that, according to \eqref{bound} and \eqref{bound1}, we have
$$G_{1,\rho}(t,x)=G_{2,\rho}(t,x)=a_0(t,x):=f(t,x),\quad (t,x)\in[0,t_0+\delta]\times\pd M.$$
Using the fact that $f\in \mathcal C^\infty([0,t_0+\delta]\times \partial M)$, satisfies $f_{|(0,t_0-2\delta)\times \partial M}=0$,  we extend $f$ by symmetry in $t$ to an element of $\mathcal C^3_0((0,+\infty)\times \partial M)$.  Then, we fix 
 $u_j$, $j=1,2$, respectively the solution of the initial boundary value problem \eqref{eq3}.
Since the restriction of $u_j$ to $(0,t_0+\delta)\times M$ solves the initial boundary value   problem
$$\left\{ \begin{array}{ll}  \pd_t^2 u_{j}-\Delta_{g} u_j+ q_j u_j  =  0, & \mbox{in}\ (0,t_0+\delta)\times M,\\  
u_j  =  G_{j,\rho}, & \mbox{on}\ (0,t_0+\delta)\times \partial M,\\  
 u_j(0,\cdot)  =0,\quad  u_j(0,\cdot)  =0 & \mbox{in}\ M.
\end{array} \right.$$
by the uniqueness of the solution of this    problem we deduce that $u_j$ takes the form \eqref{GO}  on $(0,t_0+\delta)\times M$.

\end{proof}

Now that we have completed Proposition \ref{p2}, let us show Lemma \ref{appro}

\textbf{Proof of Lemma \ref{appro}.} We consider first $(M_j,g)$, $j=1,2$,  two compact an smooth connected  manifolds such that $M$ is contained into Int$(M_1)$, $M_1$ is contained into Int$(M_2)$. Then, we fix $q_{j*}\in  H^2(\R\times M_1)$ supported on $(-1,T+1)\times Int(M_1)$, which coincides with $q_j$ on $(0,T)\times M$ such that
$$\norm{q_{j*}}_{H^2(\R\times M_1)}\leq C\norm{q_j}_{H^2((0,T)\times M)},$$
with $C>0$ depending only on $M_1$, $T$. We fix the following local coordinates in $M_2$: 
$$(\phi_1,U_1),\ldots, (\phi_m,U_m)$$
such that $$M_1\subset\bigcup_{k=1}^n U_k\subset \textrm{Int}(M_2).$$
We fix also $\psi_k\in\mathcal C^\infty_0(U_k)$, $k=1,\ldots,m$, such that
$$\sum_{k=1}^n\psi_k(x)=1,\quad x\in M_1$$
and $\psi_{k,\sharp}\in \mathcal C^\infty_0(U_k)$, $k=1,\ldots,m$,  satisfying $\psi_{k,\sharp}=1$ on supp$(\psi_k)$.
Then, we set $\zeta\in\mathcal C^\infty_0(\R^{1+n})$ such that supp$(\zeta)\subset \{(t,x):\ |(t,x)|\leq 1\}$, $\zeta\geq0$ and
$$\int_{\R^{1+n}}\zeta(t,x)dxdt=1.$$
We consider also $\zeta_\rho(t,x)=\rho^{\frac{n+1}{n+2}}\zeta(\rho^{\frac{1}{n+2}}t, \rho^{\frac{1}{n+2}}x)$ and, for $j=1,2$ and $k=1,\ldots,m$, we define
$$\begin{aligned}q_{j,k,\rho}(t,y)&=\zeta_\rho*((\phi_k^{-1})^*\psi_{k,\sharp}q_{j*})(t,y)\\
\ &=\int_{\R^{1+n}}\zeta_\rho(t-s,y-z)\psi_{k,\sharp}(\phi_k^{-1}(z))\tilde{q}_j(s,(\phi_k^{-1}(z))dsdz,\quad j=1,2,\ (t,y)\in\R^{1+n},\end{aligned}$$
and we consider
$$q_{j,\rho}(t,x)=\sum_{k=1}^mq_{j,k,\rho}(t,\phi_k(x))\psi_k(x),\quad (t,x)\in\R\times M_1,\ j=1,2.$$
Note that
$$\begin{aligned}\norm{q_{j,\rho}-q_j}_{L^2((0,T)\times M)}&=\norm{\sum_{k=1}^m(\phi_k^*q_{j,k,\rho}-q_j\psi_{k,\sharp})\psi_k}_{L^2((0,T)\times M)}\\
\ &\leq C\sum_{k=1}^m \norm{\phi_k^*q_{j,k,\rho}-q_j\psi_{k,\sharp}}_{L^2((0,T)\times U_k)}\\
\ &\leq C\sum_{k=1}^m \norm{q_{j,k,\rho}-(\phi_k^{-1})^*q_j\psi_{k,\sharp}}_{L^2((0,T)\times \phi_k(U_k))}\\
\ &\leq C\sum_{k=1}^m \norm{q_{j,k,\rho}-(\phi_k^{-1})^*q_{j*}\psi_{k,\sharp}}_{L^2(\R^{1+n})},\end{aligned}$$
with $C$ depending only on $(M,g)$, $U_k$ and $\phi_k$, $k=1,\ldots,m$. Combining this with the fact that 
$$\limsup_{\rho\to+\infty}\norm{q_{j,k,\rho}-(\phi_k^{-1})^*q_{j*}\psi_{k,\sharp}}_{L^2(\R^{1+n})}=\limsup_{\rho\to+\infty}\norm{\zeta_\rho*((\phi_k^{-1})^*\psi_{k,\sharp}q_{j*})-(\phi_k^{-1})^*q_{j*}\psi_{k,\sharp}}_{L^2(\R^{1+n})}=0$$
we deduce that 
$$\lim_{\rho\to+\infty}\norm{q_{j,\rho}-q_j}_{L^2((0,T)\times M)}=0.$$
In the same way, using the fact that $q_{j*}\in  H^2(\R\times M_1)$, we deduce \eqref{app1}-\eqref{app2}.\qed

Applying Proposition \ref{p2}, we are now in position to complete the proof of Theorem \ref{t3}.

\subsection{Proof of Theorem \ref{t3}.}
In this subsection we consider solutions $u_j\in \mathcal C([0,T];H^1(M))\cap \mathcal C^1([0,T];L^2(M))$ of \eqref{eq3} given by Proposition \ref{p2}. Note that following the proof of Proposition \ref{p2}, thanks to \eqref{supp}, we know that supp$\left(u_j|_{(0,T)\times\pd M}\right)\subset (0,T)\times\gamma$. Therefore, the condition $\Lambda_{q_1,\gamma,*}=\Lambda_{q_2,\gamma,*}$ implies 
$$(\partial_\nu u_1-\partial_\nu u_2)(t,x)=0,\quad (t,x)\in(0,T)\times\gamma.$$
On the other hand, applying \eqref{bound} and \eqref{bound1}, for all $(t,x)\in (0,t_0+\delta)\times\gamma$, we obtain
\bel{lele}\begin{aligned}0&=\rho(\partial_\nu u_1-\partial_\nu u_2)(t,x)\\
&= e^{i\rho(t-\psi(x))}(\partial_{\nu} a_{1,1}-\partial_{\nu} a_{2,1})(t,x)+\frac{e^{i\rho(t-\psi(x))}(\partial_{\nu} a_{1,2,\rho}-\partial_{\nu} a_{2,2,\rho})(t,x)}{\rho}\\
&\ \ \ +\rho(\partial_{\nu} R_{1,\rho}-\partial_{\nu} R_{2,\rho})(t,x).\end{aligned}\ee
Applying \eqref{bounded1}, we find
$$\begin{aligned}\norm{\partial_{\nu} a_{1,2,\rho}-\partial_{\nu} a_{2,2,\rho}}_{L^2((0,t_0+\delta)\times\partial M)}&\leq C(\norm{a_{1,2,\rho}}_{H^2((0,t_0+\delta)\times M)}+\norm{a_{2,2,\rho}}_{H^2((0,t_0+\delta)\times M)})\\
\ &\leq C\rho^{\frac{n}{n+2}}.\end{aligned}$$
Combining this with \eqref{GO2} and sending $\rho\to+\infty$ in \eqref{lele}, we obtain
$$\norm{\partial_{\nu} a_{1,1}-\partial_{\nu} a_{2,1}}_{L^2((0,t_0+\delta)\times\gamma)}=0.$$
It follows that
$$\partial_{\nu} a_{1,1}(t,x)-\partial_{\nu} a_{2,1}(t,x)=0,\quad (t,x)\in(0,t_0+\delta)\times\gamma.$$
Combining this with \eqref{bounded}, we deduce  that $q_1(t_0,x_0)=q_2(t_0,x_0)$. Due to the arbitrary choice for $t_0\in(0,T)$ and $x_0\in\gamma$, this equality completes the proof of Theorem \ref{t3}.
\section{Recovery of time-dependent coefficients inside the domain}

In this section we will recall some results related to the recovery of time-dependent coefficients $q$, appearing in the linear problem \eqref{eqli1}, inside the manifold $M$. Our first result is stated on a simple manifold and it concerns recovery of time-dependent coefficients inside the manifold with restriction of the data on the  top $t=T$ of the time-space manifold $(0,T)\times M$. 

\begin{thm}\label{tin1} 
Assume that $(M,g)$ is a simple manifold. Let $T > 0$ and 
let $q_1$, $q_2\in  L^\infty((0,T)\times M)$. 
Then the condition $$\mathcal D_{q_1,\partial M}H=\mathcal D_{q_2,\partial M}H,\quad H\in\mathcal H(0,T')$$ implies that $q_1 = q_2$.
\end{thm}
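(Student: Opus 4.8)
The plan is to reduce the statement to the injectivity of a light‑ray transform on the simple manifold $M$, via geometric optics, following the strategy of \cite{Ki3,KO}. Set $q=q_1-q_2$. First I would fix $H=(h,h_0,h_1)\in\mathcal H(0,T')$, let $w_1$ solve \eqref{eqli1} with $q=q_1$ and data $H$, and let $w_2$ solve \eqref{eqli1} with $q=q_2$ and the \emph{same} data $H$. Then $w=w_1-w_2$ solves $\partial_t^2w-\Delta_g w+q_2w=-q\,w_1$ with vanishing Cauchy data at $t=0$ and vanishing Dirichlet data on $(0,T)\times\partial M$, while the hypothesis $\mathcal D_{q_1,\partial M}H=\mathcal D_{q_2,\partial M}H$ forces in addition $\partial_\nu w=0$ on $(0,T)\times\partial M$ and $w(T,\cdot)=0$ on $M$. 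Taking any $v$ solving $\partial_t^2v-\Delta_g v+q_2v=0$ with the single constraint $v(T,\cdot)=0$, multiplying the equation for $w$ by $\overline v$, integrating over $(0,T)\times M$ and integrating by parts in $t$ and $x$, the self‑adjointness of $\partial_t^2-\Delta_g+q_2$ kills the volume term, the homogeneous Cauchy, Dirichlet and Neumann data of $w$ annihilate the $t=0$ temporal term and the lateral term, and $w(T,\cdot)=0$ together with $v(T,\cdot)=0$ annihilate the remaining $t=T$ term. This yields the orthogonality relation
\begin{equation}
\int_0^T\int_M q(t,x)\,w_1(t,x)\,\overline{v(t,x)}\,\d V_g\,\d t=0,
\end{equation}
valid for every admissible $w_1$ and every $v$ with $v(T,\cdot)=0$.

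Next I would insert geometric optics solutions concentrating along the null geodesics of the Lorentzian cylinder $(-\d t^2+g)$, i.e. the curves $t\mapsto(t,\gamma(t))$ with $\gamma$ a unit‑speed geodesic of $(M,g)$. Since $M$ is simple, the boundary distance function is smooth and the exponential map is a diffeomorphism, so a global phase $\phi(t,x)=t-\psi(x)$ with $|\nabla_g\psi|_g=1$ is available, exactly as in Proposition \ref{p2}. I would take $w_1=e^{i\rho\phi}(a+O(\rho^{-1}))+R_{1,\rho}$ solving the $q_1$‑equation and $v=e^{i\rho\phi}(\tilde a+O(\rho^{-1}))+R_{2,\rho}$ solving the $q_2$‑equation with $v(T,\cdot)=0$, built by the same WKB scheme (time‑reversed for $v$). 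Using the \emph{same} phase makes $w_1\overline v=a\,\overline{\tilde a}+o(1)$ carry no residual oscillation, the amplitudes $a,\tilde a$ solving the transport equation along the null rays and being localizable in a thin tube about any chosen geodesic. Letting $\rho\to+\infty$, the remainders drop out and one is left with $q$ integrated against a measure concentrated on a single light ray; shrinking the tube shows that the light‑ray transform of $q$ over every maximal null geodesic of $(0,T)\times M$ vanishes.

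The final and hardest step is to deduce $q\equiv0$ from the vanishing of this transform, and this is where simplicity of $(M,g)$ and the final‑time observation are essential. On a simple manifold the spatial geodesics have no conjugate points and meet $\partial M$ transversally at both endpoints, so the null rays sweep the whole cylinder and the transform is a smoothly parametrized family of one‑dimensional integrals; following \cite{Ki3,KO} I would invert it by reducing to the spatial geodesic ray transform—for instance by a Fourier decomposition in $t$ that turns each null‑geodesic integral into a geodesic‑ray‑transform identity for the time‑Fourier modes of $q$—and then invoking its injectivity (Mukhometov's energy estimate in dimension two, Stefanov--Uhlmann in dimension three). \textbf{The main obstacle is precisely this inversion:} one must control the coupling between the time direction and the geodesic flow and check that the finite horizon $(0,T)$, compensated by the data $w(T,\cdot)$, supplies enough null rays to recover $q$ on the entire cylinder rather than on a proper light‑cone subregion. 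Once injectivity is secured we conclude $q=q_1-q_2=0$, which is the claim.
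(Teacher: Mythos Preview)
The paper does not actually prove this theorem: its entire argument is the single sentence ``This result follows from \cite[Theorem 1.2]{KO}.'' Your outline is precisely a sketch of the strategy of that cited reference (orthogonality identity $\Rightarrow$ geometric optics $\Rightarrow$ vanishing light-ray transform $\Rightarrow$ injectivity on simple manifolds), so in that sense you are in full agreement with the paper.

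Two minor technical remarks. First, your appeal to Proposition~\ref{p2} for the phase is slightly off: that proposition builds $\psi$ in boundary normal coordinates, valid only in a collar $M_\epsilon$, and is tailored to recover $q$ \emph{on} the boundary. For the interior result one instead uses simplicity to produce a \emph{global} eikonal solution, e.g.\ $\psi(x)=\mathrm{dist}_g(x,y)$ for a point $y$ in a slightly larger simple extension of $M$; this is the construction in \cite{KO}. Second, the ``Fourier decomposition in $t$'' you propose for the final step is not quite how \cite{KO} proceeds on a finite cylinder: there one first extends $q$ by zero outside $(0,T)\times M$ (the knowledge of $w(T,\cdot)$ is what makes this legitimate) and then reduces the light-ray transform to the attenuated geodesic ray transform on $(M,g)$, whose injectivity on simple manifolds is the input from \cite{SU} and related works. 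Your honest flagging of this inversion as ``the main obstacle'' is apt; it is exactly the content borrowed from \cite{KO}.
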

This result follows from  \cite[Theorem 1.2]{KO}.

Now let us recall an improvement of this result in the Euclidean case. More precisely, let $M=\overline{\Omega}$ with $\Omega$ an open bounded, connected and smooth open subset of $\R^n$.

We introduce also the operator $\mathcal D_{q,U,V}:\mathcal H_U(0,T')\ni H \mapsto (\partial_\nu w_{|V}, w(T,\cdot))$, with $w$ solving \eqref{eqli1}.
\begin{thm}\label{tin2} 
For $q_1,\ q_2 \in L^\infty((0,T)\times \Omega)$, the condition $\mathcal D_{q_1,U,V}=\mathcal D_{q_2,U,V}$ implies  $q_1=q_2$.
\end{thm}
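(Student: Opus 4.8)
The plan is to reduce the statement to the vanishing of a light‑ray (X‑ray) transform of the difference $q:=q_1-q_2$, extended by zero outside $(0,T)\times\Omega$, taken along the null lines $s\mapsto(t_0+s,\,y+s\omega)$, $|\omega|=1$, of the Minkowski metric $-dt^2+dx^2$, and then to exploit the geometry of the $\omega_0$‑shadowed and $\omega_0$‑illuminated faces, together with the final‑time overdetermination $w(T,\cdot)$, to make this transform vanish on a family of lines rich enough to force $q\equiv0$. This is exactly the mechanism behind the results of \cite{Ki1,Ki3}, from which the theorem also follows directly; below I sketch the argument.

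First I would derive the basic integral identity. Fix $H=(h,0,h_1)\in\mathcal H_U(0,T')$ and let $w_j$ solve \eqref{eqli1} with $q=q_j$ and data $H$; then $w:=w_1-w_2$ solves $\partial_t^2w-\Delta w+q_1w=-q\,w_2$ with $w=0$ on $(0,T)\times\partial\Omega$ and $w(0,\cdot)=\partial_tw(0,\cdot)=0$, and, by the hypothesis $\mathcal D_{q_1,U,V}=\mathcal D_{q_2,U,V}$, with the overdetermined relations $\partial_\nu w=0$ on $V$ and $w(T,\cdot)=0$. Pairing the equation with a solution $v$ of the homogeneous adjoint problem $\partial_t^2v-\Delta v+q_1v=0$ (recall $q_1$ is real, so the operator is formally self‑adjoint) and applying Green's formula, the slice at $t=0$ vanishes by the null Cauchy data of $w$, the slice at $t=T$ is killed by $w(T,\cdot)=0$ provided one imposes $v(T,\cdot)=0$, and, since $w$ vanishes on the whole lateral boundary, the lateral contribution reduces to $-\int_{(0,T)\times\partial\Omega}\partial_\nu w\,\overline{v}$. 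Using $\partial_\nu w=0$ on $V$ this becomes $-\int_{(0,T)\times(\partial\Omega\setminus V)}\partial_\nu w\,\overline{v}$, and one is left with
$$\int_0^T\!\!\int_\Omega q\,w_2\,\overline{v}\,dx\,dt=-\int_{(0,T)\times(\partial\Omega\setminus V)}\partial_\nu w\,\overline{v}\,dS\,dt.$$

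Next I would insert geometric optics solutions with a common null phase $\phi(t,x)=t-x\cdot\omega$, $|\omega|=1$ near $\omega_0$: build $w_2=e^{i\rho\phi}(a+\cdots)$ solving the $q_2$‑equation with $w_2(0,\cdot)=0$ and $\mathrm{supp}(h)\subset U$, and $v=e^{i\rho\phi}(\tilde a+\cdots)$ solving the $q_1$‑equation with $v(T,\cdot)=0$, the amplitudes being transported along the supporting null line and localized by a transverse cutoff; the remainders are controlled as in Proposition \ref{p2}. The role of the $\omega_0$‑shadowed and $\omega_0$‑illuminated decomposition is twofold: it guarantees that the supporting line enters $\Omega$ through $\partial\Omega_{+,\omega_0}\subset U'$ and exits through $\partial\Omega_{-,\omega_0}\subset V'$, so the support requirement $\mathrm{supp}(h)\subset U$ is compatible with the construction, and, more importantly, it lets one absorb the uncontrolled right‑hand side above. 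Here I expect the decisive tool to be a Carleman estimate for $\partial_t^2-\Delta$ with weight linear in $(t,x)$ (the limiting weight attached to $\phi$), whose boundary contribution splits according to the sign of $\nu\cdot\omega_0$; since $\partial\Omega\setminus V$ is contained in the shadowed face $\{\nu\cdot\omega_0\ge0\}$, that term carries the favourable sign and can be dominated, so that the uncontrolled integral is $o(1)$ as $\rho\to+\infty$. As $\phi$ is real the oscillation cancels in $w_2\,\overline{v}$, and passing to the limit the leading term yields the weighted integral of $q$ over the chosen null line; varying the transverse localization and the direction $\omega$ in an open set around $\omega_0$ produces the vanishing of the light‑ray transform of $q$ over all null lines crossing $\Omega$ from $U$ to $V$.

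Finally I would invert this transform. Since $q$ is supported in the fixed slab $(0,T)\times\Omega$ and, by the final‑time condition $w(T,\cdot)=0$, the overdetermination compensates for rays not exiting through the lateral boundary within time $T$, the family of null lines obtained above is sufficiently rich to apply injectivity of the partial light‑ray transform for compactly supported functions in the Euclidean setting, which gives $q\equiv0$, i.e.\ $q_1=q_2$. I expect the main obstacle to be precisely the control of the uncontrolled lateral term on the inaccessible face $\partial\Omega\setminus V$—that is, establishing and using the boundary Carleman estimate with the sign dictated by the shadowed/illuminated geometry—together with the inversion of the light‑ray transform from a restricted family of lines; both difficulties are resolved, as in \cite{Ki1,Ki3}, by combining the shadowed/illuminated structure of $\partial\Omega$ with the extra measurement of $w(T,\cdot)$ on all of $M$.
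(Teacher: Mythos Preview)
Your approach is correct and coincides with the paper's, which does not prove Theorem~\ref{tin2} independently but simply invokes \cite[Theorem~1.1 and Proposition~A.1]{Ki3}; your sketch is precisely an outline of that argument (integral identity, oscillatory solutions along null rays, a Carleman estimate exploiting the sign of $\nu\cdot\omega_0$ to absorb $\partial_\nu w$ on the unobserved face, and Fourier/light-ray injectivity). Two minor corrections: the reference to \cite{Ki1} is misplaced---that paper concerns Strichartz estimates and local well-posedness, not coefficient recovery---and the remainder control you need is the Euclidean one from \cite{Ki3}, not Proposition~\ref{p2}, which is tailored to the boundary-normal-coordinate construction on a manifold.
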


This result follows from \cite[Theorem 1.1]{Ki3} combined with the definition of the trace map given in \cite[Proposition A.1]{Ki3}.

Armed with these results and the one of Theorem \ref{t3}, we will complete the proof of Theorem \ref{t1}, \ref{tt2} and \ref{tt3}.

\section{Recovery of the nonlinear terms}

The goal of this section is to combine all the tools of the preceding sections in order to complete the proof of Theorem \ref{t1}, \ref{tt2} and \ref{tt3}.

\textbf{Proof of Theorem \ref{t1}.} In view of Theorem \ref{t2},  for any $\lambda\in[-L_1,L_1]$ we have
$$\norm{\lambda\chi}_{H^{\frac{5}{2}}((0,T')\times\partial M)}\leq |\lambda|\norm{\chi}_{H^{\frac{5}{2}}((0,T')\times\partial M)}\leq L_1\norm{\chi}_{H^{\frac{5}{2}}((0,T')\times\partial M)}=L,$$
$$ \mathcal N_{F_j,\gamma}'(\lambda \chi)h=\Lambda_{q_{F_j,\lambda \chi},\gamma}h,\quad h\in\mathcal H_{*,\gamma}(0,T'),$$
where we recall that $q_{F_j,\lambda \chi}(t,x):=\partial_uF_j(t,x,u_{F_j,\lambda \chi}(t,x))$. Thus, condition \eqref{t1b} implies that $$\Lambda_{q_{F_1,\lambda \chi},\gamma,*}=\Lambda_{q_{F_2,\lambda \chi},\gamma,*}.$$ Moreover,  by the Sobolev embedding theorem and Lemma \ref{l2}, we find $$u_{F_j,\lambda \chi}\in(\mathcal C([0,T];H^2(M))\cap \mathcal C^2([0,T];L^2(M)))\subset H^2((0,T)\times M)\cap \mathcal C([0,T]\times M).$$
Combining this with the fact that  $F_j\in \mathcal C^3(\R_+\times M\times\R)$, we deduce that $q_{F_j,\lambda \chi}\in H^2((0,T)\times M)\cap \mathcal C([0,T]\times M)$ and applying Theorem \ref{t3}, we obtain
$$ q_{F_1,\lambda \chi}(t,x)=q_{F_2,\lambda \chi}(t,x),\quad (t,x,\lambda)\in(0,T)\times\gamma\times[-L_1,L_1].$$
Therefore, using the fact that $\chi=1$ on $[\delta,T]\times\gamma$, we obtain
$$ \partial_uF_1(t,x,\lambda)=q_{F_1,\lambda \chi}(t,x)=q_{F_2,\lambda \chi}(t,x)=\partial_uF_2(t,x,\lambda),\quad (t,x,\lambda)\in[\delta,T]\times\gamma\times[-L_1,L_1].$$
Finally, applying \eqref{t1a}, we obtain \eqref{t1c}. This completes the proof of Theorem \ref{t1}.\qed
\ \\
\ \\
\textbf{Proof of Theorem \ref{tt2} and \ref{tt3}.} Note first that, for  $\lambda\in[-L_2,L_2]$ and $K: (t,x)\mapsto \lambda$, we find
$$\begin{aligned}\norm{K}_{H^{\frac{5}{2}}((0,T')\times\partial M)}+\norm{K}_{H^{\frac{5}{2}}( M)}&=|\lambda|(\norm{1}_{H^{\frac{5}{2}}((0,T')\times\partial M)}+\norm{1}_{H^{\frac{5}{2}}( M)})\\
\ &\leq L_2(\norm{1}_{H^{\frac{5}{2}}((0,T')\times\partial M)}+\norm{1}_{H^{\frac{5}{2}}( M)})=L.\end{aligned}$$
Therefore, for any $\lambda\in[-L_2,L_2]$ we can fix
$$q_{F_j,\lambda}(t,x):=\partial_uF_j(t,x,u_{F_j,(\lambda,\lambda,0)}(t,x)).$$
By the Sobolev embedding theorem, we have $u_{F_j,(\lambda,\lambda,0)}\in\mathcal C([0,T]\times M)$ and we deduce that $q_{F_j,\lambda}\in\mathcal C([0,T]\times M)$. Thus, according to Theorem \ref{t2}, condition \eqref{tt2b} implies that
$$ \mathcal D_{q_{F_1,\lambda},\pd M}=\mathcal D_{q_{F_2,\lambda},\pd M},\quad  \lambda\in\left[-L_2,L_2\right].$$
Therefore, applying Theorem \ref{tin1}, we obtain
$$ q_{F_1,\lambda}(t,x)=q_{F_2,\lambda}(t,x),\quad (t,x,\lambda)\in(0,T)\times M\times\left[-L_2,L_2\right].$$
It follows that
\bel{tt2e} \partial_uF_1(0,x,\lambda)=q_{F_1,\lambda}(0,x)=q_{F_2,\lambda }(0,x)=\partial_uF_2(0,x,\lambda),\quad (x,\lambda)\in M\times\left[-L_2,L_2\right],\ee
\bel{tt2f} \partial_uF_1(t,x,\lambda)=q_{F_1,\lambda }(t,x)=q_{F_2,\lambda }(t,x)=\partial_uF_2(t,x,\lambda),\quad (t,x,\lambda)\in[0,T]\times\pd M\times\left[-L_2,L_2\right].\ee
Combining this with \eqref{tt2a} we deduce \eqref{tt2c}-\eqref{tt2d}. This proves Theorem \ref{tt2}. In a similar way, Theorem  \ref{tt3} can be deduced by combining Theorem \ref{t2} with Theorem \ref{tin2}.\qed

\section{Appendix}
This Appendix is devoted to the proof of existence of sufficiently smooth solutions of \eqref{eqn1}, with $F\in\mathcal A$ (resp. $F\in\mathcal A_*$ when $u_0=u_1=0$). 

Let us observe that we have not find any references in the mathematical literature showing existence and uniqueness of smooth solutions of \eqref{eqn1}. We have not even find  local well-posedness results for general class of seminilinear hyperbolic equations on manifolds with non-homogeneous Dirichlet boundary conditions (we have only find results  like \cite{BLP,IJ}  treating such problems with homogeneous boundary conditions). For this reason, and even though some of these results follow from well known arguments,  we have decided to provide the full proof of these results  in this Appendix. 

We start with a result of local well-posedness for the problem \eqref{eqn1} that can be proved by mean of Strichartz estimates stated in this context and several classical arguments.
\begin{lem}\label{l1} Assume that $n=2$ or $n=3$. Let $F\in \mathcal A\cup \mathcal A_*$, fix $T'\in(0,+\infty)$ and let $f\in H^{\frac{5}{2}}((0,T')\times\partial M)$, $u_0\in H^{\frac{5}{2}}( M)$ and $u_1\in H^{\frac{3}{2}}( M)$ satisfy $f_{|t=0}={u_0}_{|\partial M}$, $\partial_tf_{|t=0}={u_1}_{|\partial M}$. Let  $L>0$ be such that
\bel{lelle}\norm{f}_{H^{\frac{5}{2}}((0,T')\times\partial M)}+\norm{u_0}_{H^{\frac{5}{2}}( M)}+\norm{u_1}_{H^{\frac{3}{2}}( M)}\leq L.\ee
We consider the estimate
\bel{l1ab}\norm{u}_{\mathcal C([0,T];H^1(M))}+\norm{u}_{L^p(0,T;L^{2p}(M))}\leq C_1L,\ee
with $C_1$ depending only on $T'$, $b$, $M$, $c_1$,
and we define the sets
$$\begin{aligned}\mathcal T_{F,L}:=\{&T\in(0,T']:\ \textrm{for all data $(f,u_0,u_1)$ satisfying \eqref{lelle}, \eqref{eqn1} admits a unique solution }\\
\ &u\in\mathcal C^1([0,T]; L^2(M))\cap \mathcal C([0,T]; H^1(M))\cap L^{p}(0,T;L^{2p}(M))\textrm{ satisfying \eqref{l1ab}}\},\end{aligned}$$
$$\mathcal T_L:=\underset{F\in\mathcal A}{\bigcap}\mathcal T_{F,L},$$
with $p>1$satisfying \eqref{pp}. Then the set $\mathcal T_L$ is not empty and  $\sup\mathcal T_L=T_1(L)\in(0,T']$ depends on $L$, $b$, $M$ and $c_1$. In addition,  \eqref{eqn1}, with $T=T_1(L)$, admits a unique solution lying in  $\mathcal C^1([0,T_1(L)); L^2(M))\cap \mathcal C([0,T_1(L)); H^1(M))\cap L^{p}(0,T_1(L);L^{2p}(M))$.\end{lem}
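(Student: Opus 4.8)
The plan is to reduce \eqref{eqn1} to a semilinear problem with homogeneous boundary and initial data, and then to solve the reduced problem by a contraction argument in the energy--Strichartz class. First I would lift the inhomogeneous data: using the compatibility conditions $f_{|t=0}={u_0}_{|\partial M}$ and $\partial_tf_{|t=0}={u_1}_{|\partial M}$ together with the regularity $f\in H^{5/2}$, $u_0\in H^{5/2}$, $u_1\in H^{3/2}$, I would construct a lift $G$ with $G_{|(0,T')\times\partial M}=f$, $G(0,\cdot)=u_0$, $\partial_tG(0,\cdot)=u_1$, controlled by $L$. Writing $u=v+G$, the unknown $v$ then solves $\partial_t^2 v-\Delta_g v=-F(t,x,v+G)-(\partial_t^2G-\Delta_gG)$ with zero Cauchy and Dirichlet data; by Sobolev embedding ($n\leq3$ gives $H^{5/2}\hookrightarrow\mathcal C$) the forcing lies in the right spaces. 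From now on I would work on this reduced problem.

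Second, the functional framework: I would set $X_T:=\mathcal C([0,T];H^1(M))\cap\mathcal C^1([0,T];L^2(M))\cap L^p(0,T;L^{2p}(M))$ and define the map $\Phi$ sending $w\in X_T$ to the solution $v$ of the \emph{linear} IBVP $\partial_t^2 v-\Delta_g v=-F(t,x,w+G)-(\partial_t^2G-\Delta_gG)$ with zero data. The energy estimate and hidden regularity of \cite[Theorem 2.1]{LLT} control $\norm{v}_{\mathcal C([0,T];H^1)\cap \mathcal C^1([0,T];L^2)}$ by the $L^1(0,T;L^2)$ norm of the forcing, while the Strichartz estimate for the wave operator on $(M,g)$ with homogeneous Dirichlet condition --- valid in the regime $n=2,3$ and for the admissible pair $(p,2p)$ with $p$ as in \eqref{pp} --- supplies the additional $L^p(0,T;L^{2p})$ control. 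This is exactly where the restriction $n\leq3$ enters, through the available (lossy) Strichartz estimates on a manifold with boundary.

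Third, the nonlinear estimates: using \eqref{non1} I would bound, via a first-order Taylor expansion of $F$ in $u$, the difference $F(\cdot,w_1+G)-F(\cdot,w_2+G)$ pointwise by $c_1(1+|w_1+G|^{b-1}+|w_2+G|^{b-1})|w_1-w_2|$, and then estimate its $L^2_x$ norm by Hölder, placing $w_1-w_2$ in $L^{2p}_x$ and the $(b-1)$-powers in $L^{2p/(p-1)}_x$. The condition $p>b$ (for $n=2$) makes the factor $\norm{w+G}_{L^{2p(b-1)/(p-1)}_x}^{b-1}$ controllable by interpolation between $H^1$ and the Strichartz norm, while the remaining constraint in \eqref{pp} ensures that, after Hölder in time, the resulting bound carries a positive power $T^{\theta}$. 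For $T$ small (depending only on $L,b,M,c_1$) the map $\Phi$ sends a ball of radius $\sim C_1L$ into itself and is a contraction, yielding a unique fixed point satisfying \eqref{l1ab}; uniqueness in the full class then follows from a Gronwall energy estimate for the difference of two solutions. \textbf{I expect this nonlinear estimate --- closing the powers against the lossy Strichartz norm and extracting a positive power of $T$ --- to be the main obstacle, and it is precisely what dictates the exponent conditions \eqref{pp} and the dimensional restriction.}

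Finally, the structural conclusions. Since every constant above depends on $F$ only through the fixed data $c_1,b$ of the class $\mathcal A$, the local time $\tau(L)$ is uniform over $F\in\mathcal A$, so $(0,\tau(L)]\subset\mathcal T_{F,L}$ for all such $F$ and hence $\mathcal T_L\supset(0,\tau(L)]\neq\emptyset$. Each $\mathcal T_{F,L}$ is downward closed (restricting a solution on $[0,T]$ to $[0,T'']$ with $T''<T$ still solves the problem and satisfies \eqref{l1ab}), hence so is the intersection $\mathcal T_L$; therefore $(0,T_1(L))\subset\mathcal T_L$ with $T_1(L)=\sup\mathcal T_L\in(0,T']$. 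For existence up to $T_1(L)$ I would pick $T_n\uparrow T_1(L)$, obtain solutions $u_n$ on $[0,T_n]$ with the uniform bound $C_1L$, and glue them by uniqueness into a single solution on $[0,T_1(L))$ lying in $\mathcal C^1([0,T_1(L));L^2(M))\cap\mathcal C([0,T_1(L));H^1(M))\cap L^p(0,T_1(L);L^{2p}(M))$, which is the asserted maximal solution.
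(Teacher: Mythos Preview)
Your proposal is correct and follows essentially the same route as the paper: lift the data to $G\in H^3((0,T')\times M)$ via a Lions--Magenes extension, reduce to the homogeneous problem for $v=u-G$, and run a fixed-point argument in $\mathcal C([0,T];H^1)\cap L^p(0,T;L^{2p})$ using Strichartz estimates on manifolds with boundary together with H\"older in time to extract a positive power of $T$. The paper is slightly more explicit on two points you leave implicit: it cites the Blair--Smith--Sogge Strichartz estimates \cite{BSS} for the homogeneous flow and invokes the Christ--Kiselev lemma to pass to the Duhamel term, and it bounds the nonlinearity more directly via $\norm{F(\cdot,v+G)}_{L^2_x}\lesssim 1+\norm{v+G}_{L^{2b}_x}^b$ followed by $\norm{v}_{L^b_tL^{2b}_x}\leq T^{(p-b)/(pb)}\norm{v}_{L^p_tL^{2p}_x}$, rather than your general H\"older splitting; but these are cosmetic differences and your scheme closes in the same way.
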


\begin{proof} We prove this result by applying some arguments of \cite{Ka,Ki1} that we adapt to problems stated with non-homogeneous Dirichlet boundary conditions. According to \cite[Theorem 2.3, Chapter 4]{LM2}, there exists $G\in H^3((0,T')\times M)$ satisfying
$$G_{|(0,T')\times\partial M}=f,\quad G_{|t=0}=u_0,\quad \partial_tG_{|t=0}=u_1,$$
\bel{l1a}\norm{G}_{H^3((0,T')\times M)}\leq C (\norm{f}_{H^{\frac{5}{2}}((0,T')\times\partial M)}+\norm{u_0}_{H^{\frac{5}{2}}( M)}+\norm{u_1}_{H^{\frac{3}{2}}( M)})\leq CL\ee
where $C$ depends only on $M$ and $T'$. From now on and in all the remaining part of this proof, we denote by $C$ a constant depending on $M$, $\epsilon_1$, $T'$, $c_1$ and $b$.
We fix $T_2\in(0,T']$  to be determined and note that, by the Sobolev embedding theorem, we have $G\in \mathcal C([0,T_2]\times M)$ and for
$$G_1:=-(\pd_t^2 G-\Delta_{g} G ),$$
one can check that $G_1\in H^1(0,T_2;L^2(M))$. Then, we can split the solutions of  \eqref{eqn1} into two terms $u=G+v$ with $v$ solving
\bel{eqn2}
\left\{ \begin{array}{ll}  \pd_t^2 v-\Delta_{g} v + F(t,x,v+G)  =  G_1(t,x), & \mbox{in}\ (0,T_2)\times M,\\  
v  =  0, & \mbox{on}\ (0,T_2)\times \partial M,\\  
 v(0,\cdot)  =0,\quad  \pd_tv(0,\cdot)  =0 & \mbox{in}\ M.
\end{array} \right.
\ee
We will prove existence of a solution of \eqref{eqn2} by mean of a fixed point argument. We denote by $A$ the operator $-\Delta_g$ in $M$ with Dirichlet boundary condition. Now consider the map $\mathcal G$ defined on $\mathcal C([0,T_2];H^1_0(M))\cap L^p(0,T_2;L^{2p}(M))$ by
$$\mathcal G[v](t):=-\int_0^t\sin((t-s)A^{\frac{1}{2}})A^{-\frac{1}{2}}F(s,\cdot,v(s,\cdot)+G(s,\cdot))ds+\int_0^t\sin((t-s)A^{\frac{1}{2}})A^{-\frac{1}{2}}G_1(s,\cdot)ds$$
Combining the Christ-Kieslev lemma (see for instance \cite[Lemma 1]{Ki1} and also \cite{CK} for the original result) with the Strichartz estimates on manifolds stated in \cite[Theorem 1]{BSS} and following \cite[Lemma 2]{Ki1}, we deduce that
$$\begin{aligned}&\norm{\mathcal G(v)}_{\mathcal C([0,T_2];H^1(M))}+\norm{\mathcal G(v)}_{L^p(0,T_2;L^{2p}(M))}\\
&\leq  C\left(\norm{v}_{L^b(0,T_2;L^{2b}(M))}^b+\norm{G}_{L^b(0,T_2;L^{2b}(M))}^b +T_2^\frac{3}{2}L+T_2\right)\end{aligned}$$
with $C>0$ depending on $c_1$, and $b$. On the other hand, by the Sobolev embedding theorem, we have
$$\norm{G}_{L^b(0,T_2;L^{2b}(M))}\leq C\norm{G}_{L^b(0,T_2;L^{\infty}(M))}\leq C\norm{G}_{L^b(0,T_2;H^2(M))}\leq CT_2^{\frac{1}{b}}\norm{G}_{L^\infty(0,T_2;H^2(M))}\leq CT_2^{\frac{2+b}{2b}}L$$
and the H\"older inequality implies
$$\norm{v}_{L^b(0,T_2;L^{2b}(M))}\leq CT_2^{\frac{p-b}{pb}}\norm{v}_{L^p(0,T_2;L^{2p}(M))}.$$
Thus, we have
\bel{l1b}\begin{aligned}&\norm{\mathcal G(v)}_{\mathcal C([0,T_2];H^1(M))}+\norm{\mathcal G(v)}_{L^p(0,T_2;L^{2p}(M))}\\
&\leq  Cc_1T_2^{\frac{p-b}{p}}\norm{v}_{L^p(0,T_2;L^{2p}(M))}^b+Cc_1T_2^{\frac{2+b}{2}}L^b +4T_2^\frac{3}{2}L+Cc_1T_2.\end{aligned}\ee
In the same way, fixing $v_1,v_2\in \mathcal C([0,T_2];H^1_0(M))\cap L^p(0,T_2;L^{2p}(M))$ and applying  the H\"older inequality, we get
\bel{l1c}\begin{aligned}&\norm{\mathcal G(v_1)-\mathcal G(v_2)}_{\mathcal C([0,T_2];H^1(M))}+\norm{\mathcal G(v_1)-\mathcal G(v_2)}_{L^p(0,T_2;L^{2p}(M))}\\
&\leq  CT_2^{\frac{p-b}{p}}\norm{v_1-v_2}_{L^p(0,T_2;L^{2p}(M))}\left(\norm{v_1}_{L^p(0,T_2;L^{2p}(M))}^{b-1}+\norm{v_2}_{L^p(0,T_2;L^{2p}(M))}^{b-1}+\norm{G}_{L^p(0,T_2;L^{2p}(M))}^{b-1}+T_2^{\frac{b-1}{p}}\right).\end{aligned}\ee
Combining \eqref{l1b}-\eqref{l1c} with the Poincar\'e fixed point theorem, we deduce that there exists $\beta>0$ such that for
$$T_2:=\min\left(C\min\left(L^{\beta},1\right),T'\right),$$
with $C$ some suitable constant depending only on $T'$, $b$, $M$, $\epsilon_1$ and $c_1$, the map $\mathcal G$ admits a unique fixed point $v$ in the set
$$\{w\in\mathcal C([0,T_2];H^1_0(M))\cap L^p(0,T_2;L^{2p}(M)):\ \norm{w}_{\mathcal C([0,T_2];H^1(M))}+\norm{w}_{L^p(0,T_2;L^{2p}(M))}\leq C_1L\},$$
where $C_1$ is also a constant depending only on $b$, $M$ and $c_1$.
One can easily deduce that this fixed point $v$ is also lying in $\mathcal C^1([0,T_2];L^2(M))$, it satisfies \eqref{l1ab} and it solves \eqref{eqn2}. This proves the existence of local solutions for \eqref{eqn1} on $[0,T_2]$ for any $F\in \mathcal A\cup \mathcal A_*$. The uniqueness can be deduced from arguments similar to \cite[Theorem 2.1]{Ka} (see also \cite[page 134]{KY} for same ideas). This  completes the proof of the lemma.\end{proof}

This result gives us the existence and uniqueness of variational solutions of \eqref{eqn1} on $(0,T)$, provided that the conditions \eqref{lelle} and $0<T< T_1(L)$ are fulfilled. We believe that, with some suitable restrictions imposed to the set $\mathcal A$ (see for instance \cite{BLP,CH,IJ,Ka}),  this result can be extended to a global existence result corresponding to the condition $T=T_1(L)=T'$, for all $L>0$  or for some values of $L>0$. However, in the general setting, there is counterexamples to the global existence of solutions due to the blow up at finite time  of some of them (e.g. \cite[Proposition 6.4.1]{CH}). In order to preserve the generality of our results, we do not consider possible restriction of the class $\mathcal A$ of nonlinear terms which would allow  the extension of our local well-posedness result to existence of global solutions by proving that $T_1(L)=T'$, for all $L>0$ or for some values of $L>0$.

By mean of suitable conditions, we can increase the regularity of the solution $u$ of \eqref{eqn1} in the following way.

\begin{lem}\label{l2} Assume that $n=2$ or $n=3$ and, for $L>0$, fix $0<T<T_1(L)$, $F\in \mathcal A$.   Then,  for all  $(f,u_0,u_1)\in\mathcal H(0,T')$ satisfying 
$$\norm{f}_{H^{\frac{5}{2}}((0,T')\times\partial M)}+\norm{u_0}_{H^{\frac{5}{2}}( M)}+\norm{u_1}_{H^{\frac{3}{2}}( M)}\leq L,$$
   problem \eqref{eqn1} admits a unique solution lying in  $W^{1,\frac{p}{b-1}}(0,T;H^2(M))\cap W^{2,\frac{p}{b-1}}(0,T;H^1(M))$ satisfying
 \bel{l2ab}\begin{aligned}&\norm{u}_{W^{1,\frac{p}{b-1}}(0,T;H^2(M))}+\norm{u}_{W^{2,\frac{p}{b-1}}(0,T;H^1(M))}\\
&\leq C\left(\norm{f}_{H^{\frac{11}{2}}((0,T')\times\partial M)}+\norm{u_0}_{H^{\frac{11}{2}}( M)}+\norm{u_1}_{H^{\frac{9}{2}}( M)}\right),\end{aligned}\ee
with $C$ depending on $L$, $T'$, $T$, $M$, $b$, $n$ and $c_1$.
\end{lem}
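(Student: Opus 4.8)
The plan is to upgrade the regularity of the energy solution $u$ supplied by \lemref{l1}, which exists and is unique on $[0,T]$ because $T<T_1(L)$ and which satisfies \eqref{l1ab}. The overall scheme is to differentiate \eqref{eqn1} repeatedly in $t$, to treat each differentiated equation as a \emph{linear} wave equation with the common potential $q=\partial_uF(t,x,u)$ and a source built from lower-order time derivatives, to apply the linear Strichartz/energy theory to gain time regularity, and finally to trade this time regularity for spatial regularity via elliptic estimates. First I would lift the data exactly as in \lemref{l1}: by \cite[Theorem 2.3, Chapter 4]{LM2} together with $(f,u_0,u_1)\in\mathcal H(0,T')$, I pick $G$ with the prescribed traces and $\norm{G}_{H^{6}((0,T')\times M)}\leq C(\norm{f}_{H^{\frac{11}{2}}((0,T')\times\partial M)}+\norm{u_0}_{H^{\frac{11}{2}}(M)}+\norm{u_1}_{H^{\frac{9}{2}}(M)})$, which produces the right-hand side of \eqref{l2ab} and reduces matters to estimating $v=u-G$, with homogeneous lateral and initial traces.

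Next I would set $u^{(k)}=\partial_t^ku$ for $k=0,1,2$ and differentiate \eqref{eqn1}, obtaining
\[\partial_t^2u^{(k)}-\Delta_gu^{(k)}+\partial_uF(t,x,u)\,u^{(k)}=S_k,\]
with $S_0=0$, $S_1=-\partial_tF(t,x,u)$ and $S_2=-\partial_t^2F-2\,\partial_t\partial_uF\,\partial_tu-\partial_u^2F\,(\partial_tu)^2$, with lateral data $u^{(k)}=\partial_t^kf$ and initial data read off from the equation, e.g. $u^{(2)}(0,\cdot)=\Delta_gu_0-F(0,\cdot,u_0)$. The role of \eqref{comp1} together with \eqref{comcom1} is precisely to make each of these initial--boundary value problems compatible: since $F(0,\cdot,\cdot)$ vanishes on $\partial M$ identically in $u$, so do all its $u$-derivatives, and likewise for $\partial_tF$, whence $u^{(k)}(0,\cdot)_{|\partial M}=\partial_t^kf(0,\cdot)$ for $k\leq 2$. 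Solving the triangular system in the order $k=0,1,2$ (each $S_k$ depending only on $u^{(0)},\dots,u^{(k-1)}$) and invoking the Strichartz estimates of \cite[Theorem 1]{BSS} with the Christ--Kiselev lemma, together with \cite[Proposition 1]{HK} and \cite{LLT} for the energy and normal-trace bounds, I would obtain $u^{(k)}\in\mathcal C([0,T];H^1(M))\cap\mathcal C^1([0,T];L^2(M))\cap L^p(0,T;L^{2p}(M))$ for $k=0,1,2$. In particular $u,\partial_tu,\partial_t^2u\in L^\infty(0,T;H^1(M))\subset L^{\frac{p}{b-1}}(0,T;H^1(M))$, which already delivers membership in $W^{2,\frac{p}{b-1}}(0,T;H^1(M))$.

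The decisive estimates are those of the nonlinear sources, which must be placed in $L^1(0,T;L^2(M))$ for the linear theory to close. Using \eqref{non1}, the bounds $\abs{\partial_u^jF}\leq c_1(1+\abs{u}^{b-j})$, the Strichartz bounds $u,\partial_tu\in L^p(0,T;L^{2p}(M))$ and Hölder's inequality, one finds $\norm{S_1(t)}_{L^2}\lesssim 1+\norm{u(t)}_{L^{2p}}^{b}$ and $\norm{\partial_u^2F\,(\partial_tu)^2(t)}_{L^2}\lesssim\norm{u(t)}_{L^{2p}}^{b-2}\norm{\partial_tu(t)}_{L^{2p}}^{2}$, both lying in $L^{p/b}(0,T)\subset L^1(0,T)$ thanks to $p\geq b$. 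The zeroth-order term $\partial_uF\,u^{(k)}$ is handled by writing $\norm{\partial_uF\,u^{(k)}(t)}_{L^2}\lesssim(1+\norm{u(t)}_{L^{2p}}^{b-1})\norm{u^{(k)}(t)}_{L^r}$ with $\frac1r=\frac12-\frac{b-1}{2p}$, bounding $\norm{u^{(k)}}_{L^r}$ by $\norm{u^{(k)}}_{H^1}$ through Sobolev embedding, and then absorbing it via a Grönwall argument over a fine partition of $[0,T]$. The admissibility $r\leq 6$ for $n=3$ is exactly the constraint $3(b-1)\leq 2p=10$, i.e.\ $b\leq\frac{13}{3}$, while for $n=2$ the embedding holds for every finite $r$ under $p>3(b-1)$; this is where the restrictions \eqref{pp} and the hypothesis $n\leq 3$ enter.

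Finally I would upgrade the spatial regularity. For a.e.\ $t$, the functions $u(t,\cdot)$ and $\partial_tu(t,\cdot)$ solve the elliptic Dirichlet problems $-\Delta_gu=-\partial_t^2u-F(t,\cdot,u)$ and $-\Delta_g\partial_tu=-\partial_t^3u-\partial_tF-\partial_uF\,\partial_tu$ with boundary data $f(t,\cdot),\partial_tf(t,\cdot)\in H^{\frac32}(\partial M)$. Since $\partial_t^2u,\partial_t^3u\in\mathcal C([0,T];L^2(M))$ (the latter because $u^{(2)}\in\mathcal C^1([0,T];L^2(M))$), while the nonlinear right-hand sides lie in $L^{\frac{p}{b-1}}(0,T;L^2(M))$ by the splitting $\abs{u}^{b}=\abs{u}^{b-1}\abs{u}$ — pairing $\abs{u}^{b-1}\in L^{\frac{p}{b-1}}(0,T;L^{\frac{2p}{b-1}}(M))$ against $\abs{u}\in L^\infty(0,T;H^1(M))\hookrightarrow L^\infty(0,T;L^r(M))$, and analogously for $\partial_uF\,\partial_tu$ — elliptic regularity yields $u,\partial_tu\in L^{\frac{p}{b-1}}(0,T;H^2(M))$, i.e.\ $u\in W^{1,\frac{p}{b-1}}(0,T;H^2(M))$, with the quantitative bound \eqref{l2ab}. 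Uniqueness is inherited from \lemref{l1}, the solution being already unique in the wider energy class. The hard part is the mixed space--time control of the quadratic source $\partial_u^2F\,(\partial_tu)^2$ and of $\partial_uF\,\partial_tu$: their admissibility is exactly what forces $n\leq 3$ and the stated range of $b$, and it is the interplay between the Strichartz norm $L^p(L^{2p})$ and the energy norm $\mathcal C(H^1)$ — rather than either alone — that makes the estimates close.
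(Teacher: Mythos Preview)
Your proposal is correct and follows essentially the same route as the paper: lift the data to $G\in H^6$ via \cite[Theorem 2.3, Chapter 4]{LM2}, differentiate the equation in $t$ to obtain linear wave equations for the successive time derivatives with the common potential $q=\partial_uF(\cdot,u)$, use the compatibility conditions \eqref{comp1} and \eqref{comcom1} to make those problems well posed in $\mathcal C([0,T];H^1)\cap\mathcal C^1([0,T];L^2)$, and finally upgrade to $L^{p/(b-1)}(0,T;H^2)$ for $u$ and $\partial_tu$ by elliptic regularity. The paper carries this out with $v=u-G$ (so that all boundary and initial data vanish), writes the equation for $v_3=\partial_t^3v$ explicitly rather than reading $\partial_t^3u\in\mathcal C([0,T];L^2)$ off $u^{(2)}\in\mathcal C^1([0,T];L^2)$, and relies mostly on the linear energy estimate of \cite[Proposition~1]{HK} rather than Strichartz at the differentiated level; these are presentational differences only, and your splitting $|u|^b=|u|^{b-1}|u|$ with $|u|^{b-1}\in L^{p/(b-1)}(L^{2p/(b-1)})$ paired against $|u|\in L^\infty(H^1)$ is exactly what makes the elliptic step land in $L^{p/(b-1)}(0,T;L^2)$.
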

\begin{proof}

According to \cite[Theorem 2.3, Chapter 4]{LM2}, in view of the compatibility condition \eqref{comp1}, there exists $G\in  H^6((0,T')\times M)$ satisfying
\bel{coco1}\begin{aligned}&G_{|(0,T')\times\partial M}=f,\quad G_{|t=0}=u_0,\quad \partial_tG_{|t=0}=u_1,\ \partial_t^2G_{|t=0}=\Delta_gu_0,\\
&\partial_t^3G_{|t=0}=\Delta_gu_1,\quad \partial_t^4G_{|t=0}=\Delta_g^2u_0,\end{aligned}\ee
$$\norm{G}_{H^3((0,T')\times M)}\leq C_1 L,$$
\bel{l2a}\norm{G}_{H^6((0,T')\times M)}\leq C_2 \left(\norm{f}_{H^{\frac{11}{2}}((0,T')\times\partial M)}+\norm{u_0}_{H^{\frac{11}{2}}( M)}+\norm{u_1}_{H^{\frac{9}{2}}( M)}\right),\ee
where $C_1$, $C_2$ depend only on $T'$, $M$.
Then, following Lemma \ref{l1}, the solution $u\in\mathcal C^1([0,T]; L^2(M))\cap \mathcal C([0,T]; H^1(M))\cap L^{p}(0,T;L^{2p}(M))$ of \eqref{eqn1} takes the form $u=v+G$ with $v\in\mathcal C^1([0,T]; L^2(M))\cap \mathcal C([0,T]; H_0^1(M))\cap L^{p}(0,T;L^{2p}(M))$ solving \eqref{eqn2}. Thus,  the proof will be completed if we prove that $v\in W^{1,\frac{p}{b-1}}(0,T;H^2(M))\cap W^{3,\frac{p}{b-1}}(0,T;L^2(M))$ satisfies
 \bel{l2ac}\norm{v}_{W^{1,\frac{p}{b-1}}(0,T;H^2(M))}+\norm{v}_{W^{2,\frac{p}{b-1}}(0,T;H^1(M))}\leq C\norm{G}_{H^6((0,T')\times M)}.\ee
For this purpose, we remark first that since $v\in\mathcal C^1([0,T]; L^2(M))\cap  L^{p}(0,T;L^{2p}(M))$, for
$$q(t,x):=\partial_uF(t,x,u(t,x)),\quad (t,x)\in[0,T]\times M,$$ 
we have
$$\norm{q}_{L^{\frac{p}{b-1}}(0,T;L^3(M))}\leq c_1\norm{1+|u|^{b-1}}_{L^{\frac{p}{b-1}}(0,T;L^3(M))}\leq C(\norm{u}_{L^p(0,T;L^{3(b-1)}(M))}^{b-1}+1)$$
and using the fact that, for $n=3$, $2p=10\geq 3(\frac{13}{3}-1)\geq3(b-1)$ and the fact that $p>3(b-1)$,  for $n=2$, we have $q\in L^{\frac{p}{b-1}}(0,T;L^3(M))$.
Thus, by the Sobolev embedding theorem, we deduce that $q\partial_tv\in L^{\frac{p}{b-1}}(0,T;H^{-1}(M))$. Moreover, using the fact that by density, for a.e $(t,x)\in(0,T)\times M$, we have
$$\partial_t[F(t,x,u(t,x))]=\partial_tF(t,x,u(t,x))+\partial_uF(t,x,u(t,x))\partial_tv(t,x)+\partial_uF(t,x,u(t,x))\partial_tG(t,x)$$and the fact that for 
$$G_2(t,x):= -\partial_uF(t,x,u(t,x))\partial_tG(t,x)-\partial_tF(t,x,u(t,x))-\partial_t^3G(t,x)+\Delta_g\partial_tG(t,x),\quad (t,x)\in[0,T]\times M,$$
we have 
$$\norm{G_2}_{L^{\frac{p}{b}}(0,T;L^2(M))}\leq C(\norm{u}_{L^{p}(0,T;L^{2p}(M))}+1),$$
we deduce that $E:(t,x)\mapsto F(t,x,u(t,x))\in W^{1,\frac{p}{b}}(0,T;H^{-1}(M))\subset\mathcal C([0,T];H^{-1}(M))$ and $v_1:=\partial_tv\in\mathcal C([0,T];L^2(M))\cap \mathcal C^1([0,T];H^{-1}(M))$.
Moreover, in view of \eqref{coco1}, we have  $$G_1(0,x)=-\partial_t^2G(0,x)+\Delta_gG(0,x)=0,\quad x\in M.$$

Therefore, combining  \cite[Theorem 9.1, Chapter 3]{LM1}  with \cite[Proposition 1]{HK}\footnote{The result \cite[Proposition 1]{HK} is stated for a bounded subdomain of $\R^n$ but it can be extended without any difficulty to a compact Riemannian manifold of dimension $n$.},  we deduce that $v_1$ is the unique element of  $\mathcal C^1([0,T]; L^2(M))\cap \mathcal C([0,T]; H_0^1(M))$ solving  the linear problem
\bel{eql1}
\left\{ \begin{array}{ll}  \pd_t^2 v_1-\Delta_{g} v_1 + q(t,x)v_1  =  G_2(t,x), & \mbox{in}\ (0,T)\times M,\\  
v_1  =  0, & \mbox{on}\ (0,T)\times \partial M,\\  
v_1(0,\cdot)  =0,\quad  \pd_tv_1(0,\cdot)  =-F(0,x,u_0(x)) & \mbox{in}\ M.
\end{array} \right.
\ee
 In the same way, we can prove that $v_2=\partial_tv_1=\partial_t^2v$ is lying in $\mathcal C^1([0,T]; L^2(M))\cap \mathcal C([0,T]; H_0^1(M))$ and it solves the linear problem
\bel{eql2}
\left\{ \begin{array}{ll}  \pd_t^2 v_2-\Delta_{g} v_2 + q(t,x)v_2  =  G_3(t,x), & \mbox{in}\ (0,T)\times M,\\  
v_2  =  0, & \mbox{on}\ (0,T)\times \partial M,\\  
 v_2(0,\cdot)  =-F(0,x,u_0(x)),\quad  \pd_tv_2(0,\cdot)  =-\partial_uF(0,x,u_0(x))u_1(x)-\partial_tF(0,x,u_0(x)) & \mbox{in}\ M,
\end{array} \right.
\ee
with
$$\begin{aligned}G_3(t,x):=& -\partial_uF(t,x,u(t,x))\partial_t^2G(t,x)-2\partial_u\partial_tF(t,x,u(t,x))[\partial_tG(t,x)+v_1(t,x)]-\partial_t^2F(t,x,u(t,x))\\
\ &-\partial_t^4G(t,x)+\Delta_g\partial_t^2G(t,x)-\partial_u^2F(t,x,u(t,x))[v_1(t,x)+\partial_tG(t,x)]^2 ,\quad (t,x)\in[0,T]\times M.\end{aligned}$$
 Here we use the fact that, by the Sobolev embedding theorem, $G\in \mathcal C^3([0,T];H^2(M))\subset \mathcal C^3([0,T];L^\infty(M))$ and  $G_3\in W^{1,\frac{p}{b}}(0,T;L^2(M))$. We use also the fact that, thanks to \eqref{comcom1}, $x\mapsto F(0,x,u_0(x))\in H^1_0(M)$. Finally, using similar arguments, we can prove that $v_3=\partial_tv_2=\partial_t^2v_1\in\mathcal C^1([0,T]; L^2(M))\cap \mathcal C([0,T]; H_0^1(M))$ solves the linear problem
\bel{eql3}
\left\{ \begin{array}{ll}  \pd_t^2 v_3-\Delta_{g} v_3 + q(t,x)v_3  =  G_4(t,x), & \mbox{in}\ (0,T)\times M,\\  
v_3  =  0, & \mbox{on}\ (0,T)\times \partial M,\\  
 v_3(0,x)  =\pd_tv_2(0,x),\quad  \pd_tv_3(0,x)  =-\Delta_{g} [F(0,x,u_0(x)] + q(0,x)F(0,x,u_0(x)+G_3(0,x), & x\in M,
\end{array} \right.
\ee
with 
$$G_4(t,x):=\partial_tG_3(t,x)-\partial_u^2F(t,x,u(t,x))(v_1(t,x)+\partial_tG(t,x))v_2(t,x).$$
Again, we use here the fact that condition \eqref{comcom1} implies
$$\partial_uF(0,x,u_0(x))=\partial_tF(0,x,u_0(x))=0,\quad x\in\partial M$$
and by the same way that $\pd_tv_2(0,\cdot)\in H^1_0(M)$. This proves that, for a.e. $t\in(0,T)$, $v_1(t,\cdot)$ solves the boundary value problem

$$\left\{ \begin{array}{ll}  -\Delta_{g} v_1(t,\cdot) =-v_3(t,\cdot)- q(t,\cdot)v_1(t,\cdot) +   G_2(t,\cdot), & \mbox{in}\ M,\\  
v_1  =  0, & \mbox{on}\ (0,T)\times \partial M\\  
\end{array} \right.$$
and using the fact that $-v_3+qv_1+G_2\in  L^{\frac{p}{b-1}}(0,T;L^2(M))$, we deduce that $v_1\in L^{\frac{p}{b-1}}(0,T;H^2(M))$. It follows that $v\in W^{1,\frac{p}{b-1}}(0,T;H^2(M))\cap W^{2,\frac{p}{b-1}}(0,T;H^1(M))$ and we obtain the required regularity result as well as \eqref{l2ac}.\end{proof}

Using similar arguments, we can prove the following.

\begin{lem}\label{ll2} Assume that $n=2$ or $n=3$ and, for $L>0$, fix $0<T<T_1(L)$, $F\in \mathcal A_*$.   Then,  for all  $f\in\mathcal H_*(0,T')$ satisfying 
$$\norm{f}_{H^{\frac{5}{2}}((0,T')\times\partial M)}\leq L,$$
   problem \eqref{eqn1}, with $u_0=u_1=0$, admits a unique solution lying in  $W^{1,\frac{p}{b-1}}(0,T;H^2(M))\cap W^{2,\frac{p}{b-1}}(0,T;H^1(M))$ satisfying \eqref{l2ab}.
\end{lem}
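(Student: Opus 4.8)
The plan is to repeat the proof of Lemma~\ref{l2} almost verbatim, taking advantage of the homogeneous initial data $u_0=u_1=0$ to cope with the fact that for $F\in\mathcal A_*$ I may only use the weaker compatibility condition \eqref{comcom2}. First I would invoke \cite[Theorem 2.3, Chapter 4]{LM2} to produce a lifting $G\in H^6((0,T')\times M)$ with $G_{|(0,T')\times\partial M}=f$ and, thanks to $u_0=u_1=0$ together with the compatibility conditions \eqref{comp2}, with all initial time-traces $\partial_t^kG_{|t=0}=0$ for $k=0,\ldots,4$ and $\norm{G}_{H^6((0,T')\times M)}\leq C\norm{f}_{H^{\frac{11}{2}}((0,T')\times\partial M)}$. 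Writing $u=G+v$, the function $v$ solves \eqref{eqn2} with $u_0=u_1=0$, and Lemma~\ref{l1} (which already applies to $F\in\mathcal A\cup\mathcal A_*$) provides a unique $v\in\mathcal C([0,T];H^1_0(M))\cap\mathcal C^1([0,T];L^2(M))\cap L^p(0,T;L^{2p}(M))$; uniqueness in the stated class then follows from that of $v$.

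Next I would carry out the same successive time-differentiation as in Lemma~\ref{l2}: setting $q(t,x)=\partial_uF(t,x,u(t,x))$, the functions $v_1=\partial_tv$, $v_2=\partial_t^2v$ and $v_3=\partial_t^3v$ solve the linear wave equations \eqref{eql1}, \eqref{eql2} and \eqref{eql3}, whose source terms are built from $G$ and the lower-order $v_k$. Applying \cite[Theorem 9.1, Chapter 3]{LM1} together with \cite[Proposition 1]{HK} yields $v_1,v_2,v_3\in\mathcal C([0,T];H^1_0(M))\cap\mathcal C^1([0,T];L^2(M))$, and inverting $-\Delta_g$ on the time slices upgrades $v_1$ to $L^{\frac{p}{b-1}}(0,T;H^2(M))$, since the right-hand side of the corresponding elliptic problem lies in $L^{\frac{p}{b-1}}(0,T;L^2(M))$. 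This gives $v\in W^{1,\frac{p}{b-1}}(0,T;H^2(M))\cap W^{2,\frac{p}{b-1}}(0,T;H^1(M))$ and the estimate \eqref{l2ab}, in which the $u_0$- and $u_1$-terms simply drop out.

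The only step requiring genuine attention is the verification that the initial data of \eqref{eql1}--\eqref{eql3} lie in the correct spaces, since I may only use \eqref{comcom2} rather than the stronger condition \eqref{comcom1} available for $\mathcal A$. This is exactly where $u_0=u_1=0$ is decisive: every initial datum is now evaluated at $u=0$, so that $\partial_tv_1(0,\cdot)=-F(0,\cdot,0)$, $v_2(0,\cdot)=-F(0,\cdot,0)$, and $\partial_tv_2(0,\cdot)=-\partial_tF(0,\cdot,0)$, the term $\partial_uF(0,\cdot,0)u_1$ vanishing because $u_1=0$. Consequently \eqref{comcom2}, which asserts $F(0,\cdot,0)=\partial_tF(0,\cdot,0)=0$ on $\partial M$, already forces all the relevant traces to vanish, so these data lie in $H^1_0(M)$ exactly as required for the successive regularity bootstrap. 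I expect this trace bookkeeping to be the main (and essentially only) subtlety, the remaining estimates being identical to those of Lemma~\ref{l2}.
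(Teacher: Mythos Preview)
Your proposal is correct and is exactly the approach the paper intends: the paper gives no separate proof of Lemma~\ref{ll2} beyond the remark ``Using similar arguments, we can prove the following,'' so the intended argument is precisely the repetition of the proof of Lemma~\ref{l2} that you outline. You have correctly isolated the one point that actually changes, namely that with $u_0=u_1=0$ the initial data in \eqref{eql1}--\eqref{eql3} are evaluated at $u=0$, so the weaker compatibility condition \eqref{comcom2} (in place of \eqref{comcom1}) already forces $F(0,\cdot,0)$ and $\partial_tF(0,\cdot,0)$ to vanish on $\partial M$ and hence places the relevant data in $H^1_0(M)$.
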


\section*{Acknowledgements}

 The author would like to thank Lauri Oksanen   for fruitful discussions about this problem. This work was supported by  the French National
Research Agency ANR (project MultiOnde) grant ANR-17-CE40-0029.

\end{document}